\theoremstyle{definition}
\newtheorem{definition}{Definition}[section]
\newtheorem{theorem}[definition]{Theorem}
\newtheorem{proposition}[definition]{Proposition}
\newtheorem{Lemma}[definition]{Lemma}
\numberwithin{equation}{section}
\def\wp{{W^{k,p}(I;X)}}
\begin{document}
\lineskip 1.5em
\begin{center}
\vspace{.5cm}
 {\bf \large Mild solutions of time fractional Navier-Stokes equations driven by  finite delayed external forces}\\
\vspace{.5cm}

          {\bf Md Mansur Alam \footnote{\Letter{gmansuralam94@gmail.com}}},   {\bf Shruti Dubey\footnote{\Letter{sdubey@iitm.ac.in}}}\\
                      Department of Mathematics\\                        Indian Institute of Technology Madras\\
                              Chennai-600 036, India. \\

\end{center}

\begin{abstract}
In this work, we consider time-fractional Navier-Stokes equations (NSE) with the external forces involving finite delay. Equations are considered on a bounded domain $\Omega \subset \mathbb{R}^3$ having sufficiently smooth boundary. We transform the system of equations (NSE) to an abstract Cauchy problem and then   investigate local existence and uniqueness of the mild solutions for the initial datum $\phi \in C\big([-r,0];D(A^\frac{1}{2}) \big)$, where $r>0$ and $A$ is the Stokes operator. With some suitable condition on initial datum we establish the global continuation and regularity of the mild solutions. We use  semigroup theory, tools of fractional calculus and Banach contraction mapping principle to establish our results. 
\end{abstract}
\textbf{Keywords:} Fractional calculus, Navier-Stokes equations, Delay differntial equations,  Analytic semigroup, Mild solutions, Fractional power of operators. \\
{MSC 2010\/}: 34A08, 34K37, 35D99, 76D05, 76D03.
\section{Introduction}
The Navier-Stokes equations (NSE) are the prime system of equations in the study of fluid dynamics which represent the motion of a viscous fluid passing through a region. One may consider the situation when the fluid passes through such a medium that the fluid motion behaves anomalously. To control such system one may consider the external forces having some hereditary features which depends not only on the present state of the system but also on the past history of the system. Therefore, from the last two decades, the  study of NSE with force term consisting of such delay received lot of attention. For instance see \cite{CAR, GAR, GUZ, rubio, pedro} and references therein. On the other hand, the study of  time fractional functional differential equations has gained a huge attention from the researchers, not only due to its novel applications in the field of science and engineering study but also due to the non-local nature of fractional derivatives. In particular, generalized model of a diffusion phenomena in a porous media behaves much better than the classical model of that diffusion phenomena. So, it is significant to consider  time fractional NSE with delay model which reads as follows;

Let $\Omega \subset \mathbb{R}^3$ be a bounded domain with sufficiently smooth boundary $\partial  \Omega$.

\begin{equation}\label{abf1}
\begin{cases}
~^cD^{\alpha}_t u -\Delta u + (u\cdot\nabla)u = -\nabla p+ f(t, u_t), \quad t>0, \\ \nabla \cdot u =0, \\ u|_{\partial \Omega}=0, \\u(x, t) = \phi(x, t), \quad  -r \leq t \leq 0,\quad  x \in \Omega,
\end{cases}
\end{equation}
where $u=(u_1(x, t), u_2(x, t), u_3(x, t))$ represents the velocity of the fluid, $p=p(x, t)$ is the associated pressure,  $[-r, 0]$ is the finite delayed interval, $u_t(\theta)=u(t+\theta)$, $-r \leq \theta \leq 0$,  $f$ is an external force which is given in terms of the past history of the velocity,  $\phi $ is the initial datum corresponding to delayed interval and $~^cD^{\alpha}_t u$ is the Caputo fractional order derivative of order $\alpha \in (0, 1)$.

J. Leray \cite{leray} was the first who has initially  contributed to the mathematical study of NSE. After that Kato-Fujita \cite{KF1, KF2} has proved the existence, uniqueness and regularity of the mild  solutions in space-time variable of the classical NSE  by transforming the system  into an abstract initial value problem and using semigroup theory. From  last few decades there has been lot of work on the study of classical NSE, for instance, see \cite{Giga1985, Giga1983, miyakawa1981, weissler} and references therein. Caraballo et al. \cite{CAR} was the first who considered integer order NSE with finite delay and proved the existence of weak solution in a bounded domain. For similar  investigation of these  problems on unbounded domain and unbounded delay, one may refer  \cite{GAR, GUZ}. In contrast to this, M. El-Shahed et al. \cite{shahed},  was the first who  considered time fractional Navier-Stokes equation and studied the analytical solutions by using Laplace, Fourier and Hankel transformation technique. After that, few more works have been reported on the study of analytical solutions of the similar problem in  \cite{momani, odibat, ganji}. In 2015, Carvalho-Neto et al. \cite{carvalho-neto} have studied about mild solutions to the time-fractional Navier-Stokes equations on $\mathbb{R}^{N}$.  Yong Zhou et al. \cite{zhou} have studied existence, uniqueness and regularity of mild solution for the time fractional NSE without delay on a half-space in  $\mathbb{R}^n, n \geq 3$. Recently Yejuan Wang et al. \cite{wang}  proved the global existence, regularity and decay of mild solution of fractional Navier-Stokes inclusions when the initial velocity belongs to $C([-r, 0];D(A^\epsilon))$, where $0<\epsilon <\frac{1}{2}$, by using some techniques of measure of noncompactness in $L^p$-framework. However, no work has been reported on the analysis of fractional order NSE with delay. Our aim in  this paper is to investigate the existence, uniqueness and regularity of mild solution for fractional order NSE driven by finite delayed forces in $L^2$-framework. 

The paper is organized as follows. In section $2$, we recall some definitions, preliminary results on estimation of analytic solution operators and the nonlinear term $Fu=-P(u\cdot \nabla)u$. In section $3$, we present our main results concerning local existence of mild solution of the problem (\ref{abf1}). In section $4$, we study about the maximality of interval of existence and blow up of the mild solution.  Regularity of mild solution is given in section $5$.

\section{Preliminaries}

This section recalls basic definitions, notations and preliminary results which will be used throughout the paper. We use the standard notations, $\mathbb{R}$, $\mathbb{N}$ for denoting the set of real numbers and natural numbers respectively. Let $X$ be a Banach space with the norm $\lVert . \rVert _X$. For two Banach spaces $X$ and $Y$, $\mathcal{B}(X;Y)$ denotes the space of all bounded linear map from $X$ to $Y$. For $X=Y$, we write $\mathcal{B}(X;X)$ as $\mathcal{B}(X)$. Let $1\leq p< \infty$, then for any interval $I$ in $\mathbb{R}$, $L^p(I; X)$ denotes the set of all $X$-valued measurable functions $f$ on $I$ such that $\int_I \lVert f(t) \rVert_X^p dt< \infty$ and is a Banach space endowed with the norm $\lVert f \rVert=  \big(\int_{I} \lVert f(t) \rVert_X^p \big )^{1/p}$.\\
$\wp=\{f \in L^p(I; X): f  \text{ has weak derivatives } f^{(j)} \text{ and } f^{(j)} \in L^p(I; X) \text{ for all } 1\leq j\leq k , \text{ where } j, k \in \mathbb{N}\}$ is known as  Sobolev spaces of order $k$. It is a Banach space with respect to the norm  $\lVert f \rVert_{1,p}=\sum_{j=0}^{k} \lVert f^{(j)} \rVert$.\\
Let $\Omega \subset \mathbb{R}^3$ be any domain. $W^{k,p}(\Omega)= \{ u \in L^p(\Omega) :  \text{ the weak derivatives } \partial^\gamma u\in L^p(\Omega) \ \forall \text { multi-index } \\
\gamma \text{ such that } |\gamma| \leq k, k \in \mathbb{N}\} $ are standard Sobolev spaces. For $p=2$, $W^{k,p}(\Omega)=H^k(\Omega)$ are Hilbert spaces. Let $C^\infty_0 (\Omega)$ be the set of all infinitely differentiable function with compact support and $H^1_0(\Omega)$ be the closure of $C^\infty_0 (\Omega)$ in $H^1(\Omega)$. \\
We denote $C^k(I;X)$ as the set of all $X$-valued continuously differentiable function upto order $k \in \mathbb{N}$ on $I$. $C^\theta(I;X)$ denotes the set of all H\"older continuous function with H\"older exponent $\theta \in (0,1)$.
 \begin{definition}
 Let $0<\alpha<1$, $a,b \in \mathbb{R}$ and $f\in L^1([a,b]; X)$. The \textit{Riemann Liouville integral} of order $\alpha$ is defined by, $$J^{\alpha}_tf(t)=\frac{1}{\Gamma(\alpha)} \int_{a}^{t} (t-s)^{\alpha-1}f(s) ds \quad \text{a.e }  t \in [a,b]. $$
 \end{definition}
 For $\alpha >0$, we consider \[
  g_\alpha(t) =
  \begin{cases}
    \frac{t^{\alpha-1}}{\Gamma(\alpha)} & t>0, \\
    0 & t\leq 0.
  \end{cases}
\]
 \begin{definition}
 Let $0<\alpha<1$, and $f \in L^1([a,b];X)$ be such that $g_{1-\alpha}*f \in W^{1,1}([a,b];X) $. Then the \textit{Caputo fractional derivative} of order $\alpha$ is defined by, $$~^cD^{\alpha}_t f(t)= \frac{d}{dt} J^{1-\alpha}_t (f(t)-f(a))= \frac{1}{\Gamma(1-\alpha)} \frac{d}{dt}\int_{a}^{t} (t-s)^{-\alpha}(f(s)-f(a)) ds \quad \text{a.e }  t \in [a,b].$$
 \end{definition}
Note that if $f \in C^1([a,b];X)$ then $~^cD^{\alpha}_t f(t)= J^{1-\alpha}_tf'(t)$ for all $t \in [a, b]$.\\
To start with the problem, we transform the system of equations (\ref{abf1}) to an abstract Cauchy problem. Let $\Omega$ be a bounded domain in $\mathbb{R}^3$ with sufficiently smooth boundary $\partial \Omega$ and
$L^2_{\sigma}(\Omega)=\text{closure of }{C^{\infty}_{0,\sigma}(\Omega)}$ in $L^2(\Omega)^3$, where $C^{\infty}_{0,\sigma}(\Omega)=\{u \in C^{\infty}_0(\Omega)^3 : \nabla\cdot u=0 \} $. Then $L^2_{\sigma}(\Omega)$, endowed with the usual inner product in $L^2(\Omega)^3$ is a Hilbert space. To avoid any confusion, we denote the norm on $L^2_{\sigma}(\Omega)$ as $\lVert . \rVert$.\\
Let $G(\Omega)=\{f\in L^2(\Omega)^3: \exists p \in L^2(\Omega) \text{ such that } f=\nabla p \} $. Then $G(\Omega)$ is a closed subspace of  $L^2(\Omega)^3$ and the decomposition $L^2(\Omega)^3=L^2_{\sigma}(\Omega) \oplus G(\Omega) $ holds and known as \textit{Helmholtz decomposition}. Let $P:L^2(\Omega)^3 \rightarrow L^2_{\sigma}(\Omega)$ be the Projection operator.\\
Now we define  the bilinear form as $a(u,v)= <\nabla u, \nabla v>$ where $u, v \in H^1_{0,\sigma} (\Omega)=\overline{C^{\infty}_{0,\sigma}(\Omega)}^{H^1(\Omega)}$, $<,>$ is the usual inner product on $L^2_{\sigma}(\Omega)$ and $A:D(A) \subset L^2_{\sigma}(\Omega) \rightarrow L^2_{\sigma}(\Omega)$ is the associated operator of the bilinear form. Following \cite[Theorem 1.52]{ouhabaz}, $-A$ generates analytic semigroup of contractions $\{T(t)\}_{t\geq 0}$ on $L^2_{\sigma}(\Omega)$. Moreover by following \cite{sohr}, $A=-P\bigtriangleup$ with $D(A)=L^2_{\sigma}(\Omega) \cap H^1_{0}(\Omega) \cap H^2(\Omega)$ is known as Stokes operator. Since $0 \in \rho(A)$ \cite{sohr}, where $\rho(A)$ is the resolvent set of $A$ and $-A$ generates the analytic semigroup $\{T(t)\}_{t\geq0}$, then one can define the fractional power of $A$ as follows \cite{pazy};\\
For $\alpha>0$, $A^{-\alpha}$ is defined by $A^{-\alpha}:= \frac{1}{\Gamma(\alpha)} \int_{0}^{\infty} t^{\alpha-1}T(t)dt$ which is convergent in the uniform operator topology. Also, $A^{-\alpha}$ is injective and hence define $A^\alpha:={(A^{-\alpha})}^{-1}$ with $D(A^{\alpha})= R(A^{-\alpha})$ which is densely defined closed operator in $L^2_{\sigma}(\Omega)$. For $0<\alpha<1$, $A^{-\alpha}$ is bounded linear operator and hence $D(A^\alpha)$ with the norm ${\lVert x \rVert }_{D(A^{\alpha})} =\lVert A^{\alpha}x \rVert $ (which is equivalent to the graph norm on $D(A^\alpha)$ ) is a Banach space and for $0<\alpha<\beta<1$, $D(A^{\beta}) \hookrightarrow D(A^{\alpha}) $.
Applying projection operator on (\ref{abf1}) and using Stokes operator, the system (\ref{abf1}) transforms to the following evolution equation in a Banach space $L^2_{\sigma}(\Omega)$:
\begin{equation} \label{abf2}
\left.\begin{aligned}
~^cD^{\alpha}_t u+A u &=Fu +Pf(t, u_t), \quad t>0, \\ u(t) &= \phi(t), \, \ \ \ \qquad  -r \leq t \leq 0,
\end{aligned}\right\}
\end{equation}
where $Fu=-P(u\cdot\nabla)u$.\\
Let $0<\alpha<1$. Now, we define following two families of operator on $L^2_{\sigma}(\Omega)$,
\begin{align}
\label{def:S}S_{\alpha}(t) &=\frac{1}{2\pi i} \int_{\Gamma_{\varrho,\eta}} e^{{\lambda}t} {\lambda}^{\alpha-1} ({\lambda}^{\alpha}+A)^{-1}d\lambda, \quad t>0,\\
\label{def:T}T_\alpha(t) &= \frac{1}{2 \pi i}\int_{\Gamma_{\varrho,\eta}} e^{{\lambda}t}  ({\lambda}^{\alpha}+A)^{-1} d\lambda, \quad t>0,
\end{align}

where $\Gamma_{\varrho,\eta}$ is a suitable path in $\rho(-A)$. For more details see \cite{guswanto}.\\
Using these operators and some tools of fractional calculus, we define the mild solution of (\ref{abf2}) as follows;
\begin{definition}
Let $0<T<\infty$. A function $u:[-r, T] \rightarrow D(A^{\frac{1}{2}})$ is said to be a local  mild solution of the problem (\ref{abf2})  if $u|_{[0,T]} \in C([0, T]; D(A^{\frac{1}{2}}))$ and $u$ satisfies the following integral equations;
\begin{equation}\label{int.eq}
u(t) =\left \{ \begin{aligned} &S_{\alpha}(t)\phi(0)+ \int_{0}^{t} T_{\alpha}(t-s) Fu(s) ds + \int_{0}^{t} T_{\alpha}(t-s) Pf(s,u_s)ds, \ t \in (0,T],\\
&\phi(t), -r\leq t \leq 0.
\end{aligned} \right.
\end{equation}
\end{definition}
\begin{definition}
Let $0<T<\infty $. A function $u \in C([-r,T];L^2_{\sigma}(\Omega))$ is said to be a classical solution of the problem (\ref{abf2}) if it satisfies following conditions;
\begin{enumerate}[(i)]
\item $u \in C([0,T]; D(A))$,
\item $g_{1-\alpha}*(u-u(0)) \in C^1((0,T);L^2_{\sigma}(\Omega)))$, 
\item $u$ satisfies (\ref{abf2}).
\end{enumerate}
\end{definition}
\begin{Lemma}\cite{guswanto}\label{le:S}
 Let $S_{\alpha}(t)$ be defined by (\ref{def:S}). Then following holds;
\begin{enumerate}[(i)]
\item $S_{\alpha}(t) \in \mathcal{B}(L^2_{\sigma}(\Omega))$ for each $t>0$. Moreover, $\exists$ $C_1=C_1(\alpha)>0$ such that $\lVert S_{\alpha}(t) \rVert \leq C_1$ for all $t>0$.
\item $S_{\alpha}(t) \in \mathcal{B}(L^2_{\sigma}(\Omega);D(A))$ for each $t>0$. Moreover, $\exists$ $C_2=C_2(\alpha)>0$ such that $\lVert AS_{\alpha}(t) \rVert \leq C_2 t^{-\alpha}$ for all $t>0$ and if $x\in D(A)$, then $AS_{\alpha}(t)x=S_{\alpha}(t)Ax$ for all $t>0$.
\item The function $t \mapsto S_{\alpha}(t)$ belongs to $C^\infty((0,\infty);\mathcal{B}(L^2_{\sigma}(\Omega))$ and there exists $M_n=M_n(\alpha)>0$ such that $\lVert S^{(n)}_{\alpha}(t) \rVert \leq M_n t^{-n}$, $t>0$, $n\in \mathbb{N}$.
\item For each $0<\beta<1$, $\exists$ $C_3=C_3(\alpha, \beta)>0$ such that $\lVert A^{\beta} S_{\alpha}(t) x \rVert \leq C_3 t^{-\alpha} (t^{-\alpha({\beta-1})}+1) \lVert x \rVert$, for all $x\in L^2_{\sigma}(\Omega)$, $t>0$.
\item For $x \in L^2_{\sigma}(\Omega)$, $\lim\limits_{t \to 0} \lVert S_{\alpha}(t)x-x \rVert=0$
\end{enumerate}
\end{Lemma}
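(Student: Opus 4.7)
The plan is to exploit the sectoriality of $A$: since $-A$ generates an analytic semigroup of contractions on $L^2_\sigma(\Omega)$, there exist $\omega\in(0,\pi/2)$ and $M\ge 1$ with $\|(\mu+A)^{-1}\|\le M/|\mu|$ on a sector $\Sigma_{\pi-\omega}=\{\mu\neq0:|\arg\mu|<\pi-\omega\}$. Picking $\eta$ with $\pi/2<\eta<\min\{\pi,(\pi-\omega)/\alpha\}$, the map $\lambda\mapsto\lambda^\alpha$ sends the contour $\Gamma_{\varrho,\eta}=\{re^{\pm i\eta}:r\ge\varrho\}\cup\{\varrho e^{i\theta}:|\theta|\le\eta\}$ into $\Sigma_{\pi-\omega}$, so $\|(\lambda^\alpha+A)^{-1}\|\le M/|\lambda|^\alpha$ on $\Gamma_{\varrho,\eta}$. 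The contour can be rescaled with $\varrho=1/t$ in each estimate; this scaling, together with analyticity in $\lambda$ to justify deformation, is the one trick that runs through (i)--(iv).

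For \textbf{(i)}, I would estimate
$\|S_\alpha(t)\|\le\frac{1}{2\pi}\int_{\Gamma_{1/t,\eta}}e^{\operatorname{Re}(\lambda t)}|\lambda|^{\alpha-1}\cdot\frac{M}{|\lambda|^\alpha}|d\lambda|$,
split into the two rays and the small arc, substitute $\lambda=\mu/t$, and collect a $t$-independent constant $C_1$. For \textbf{(ii)}, the identity $A(\lambda^\alpha+A)^{-1}=I-\lambda^\alpha(\lambda^\alpha+A)^{-1}$ shows $S_\alpha(t)$ maps into $D(A)$ and gives
$AS_\alpha(t)=\frac{1}{2\pi i}\int_{\Gamma_{1/t,\eta}} e^{\lambda t}\lambda^{\alpha-1}\bigl(I-\lambda^\alpha(\lambda^\alpha+A)^{-1}\bigr)d\lambda;$
the $I$-piece integrates to zero by shifting the contour to infinity (or by Cauchy's theorem on a closed contour), and the remaining term yields the $t^{-\alpha}$ bound after the same rescaling. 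Commutation $AS_\alpha(t)x=S_\alpha(t)Ax$ on $D(A)$ follows because $A$ commutes with its resolvent under the integral. For \textbf{(iii)}, differentiation under the integral is justified by the uniform exponential decay on $\Gamma_{1/t,\eta}$, producing
$S_\alpha^{(n)}(t)=\frac{1}{2\pi i}\int_{\Gamma_{1/t,\eta}}e^{\lambda t}\lambda^{\alpha+n-1}(\lambda^\alpha+A)^{-1}d\lambda,$
and the same scaling gives the $t^{-n}$ bound.

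For \textbf{(iv)}, I would apply $A^\beta$ inside the integral using $A^\beta(\lambda^\alpha+A)^{-1}=A^{\beta-1}\cdot A(\lambda^\alpha+A)^{-1}$ and the identity above, together with the moment inequality $\|A^{\beta-1}y\|\le C\|A^{-1}y\|^{1-\beta}\|y\|^{\beta}$ applied to $y=x-\lambda^\alpha(\lambda^\alpha+A)^{-1}x$; this produces two terms whose norms are bounded by $|\lambda|^{-\alpha}$ and $|\lambda|^{-\alpha\beta}$ respectively, and after scaling $\lambda=\mu/t$ and integrating one obtains the factor $t^{-\alpha}(t^{-\alpha(\beta-1)}+1)$. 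For \textbf{(v)}, I would first verify on $D(A)$: write $S_\alpha(t)x-x=\int_{\Gamma}e^{\lambda t}\lambda^{-1}\bigl(\lambda^\alpha(\lambda^\alpha+A)^{-1}x-x\bigr)d\lambda$ after adjusting by the identity $\frac{1}{2\pi i}\int_{\Gamma}e^{\lambda t}\lambda^{-1}d\lambda=1$, then use $\lambda^\alpha(\lambda^\alpha+A)^{-1}x-x=-(\lambda^\alpha+A)^{-1}Ax$ and dominated convergence to send $t\to 0$; density of $D(A)$ and the uniform bound from (i) extend the limit to all of $L^2_\sigma(\Omega)$.

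The main obstacle is item \textbf{(iv)}: one must choose the interpolation between $I$ and $\lambda^\alpha(\lambda^\alpha+A)^{-1}$ so that the estimate is uniform in $\lambda$ along $\Gamma$ \emph{and} produces the claimed split $t^{-\alpha}(t^{-\alpha(\beta-1)}+1)$ after scaling, since a naive estimate $\|A^\beta(\lambda^\alpha+A)^{-1}\|\le C|\lambda|^{\alpha(\beta-1)}$ is only valid for $|\lambda|^\alpha$ bounded away from zero; the additive $1$ in the bound reflects the need to handle small $|\lambda|$ separately via the $A^{\beta-1}\cdot I$ piece. Everything else is a routine exercise in Hankel-contour estimates for sectorial operators.
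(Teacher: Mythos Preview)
The paper does not prove this lemma: it is stated with a citation to \cite{guswanto} and no argument is given. So there is no ``paper's own proof'' to compare against; your outline would supply what the paper simply imports. Your contour-integral approach (rescaling $\Gamma_{\varrho,\eta}$ with $\varrho=1/t$ and using the sectorial resolvent bound $\|(\lambda^\alpha+A)^{-1}\|\le M|\lambda|^{-\alpha}$) is the standard route and is essentially what one finds in the cited source. It is worth noting that the paper itself, in the proof of Proposition~\ref{commutivity}, invokes the alternative subordination representation $S_\alpha(t)=\int_0^\infty M_\alpha(s)T(st^\alpha)\,ds$ via the Mainardi function; several of the items (i)--(v) can also be obtained that way by transferring the corresponding semigroup estimates for $T(\cdot)$, and for (iv) this route is arguably cleaner than the moment-inequality argument you sketch.

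One genuine slip in your outline: in part (ii) the ``$I$-piece'' $\frac{1}{2\pi i}\int_{\Gamma_{\varrho,\eta}}e^{\lambda t}\lambda^{\alpha-1}\,d\lambda$ is \emph{not} zero. The integrand has a branch point at $0$, so you cannot close the contour or push it to infinity (indeed, on the arc $|\lambda|=R$, $|\theta|<\pi/2$, the factor $e^{\lambda t}$ blows up). By Hankel's formula for $1/\Gamma$, that integral equals $t^{-\alpha}/\Gamma(1-\alpha)$. This does not damage the conclusion---the $I$-piece is still $O(t^{-\alpha})$, matching the bound you want---but the reasoning should be corrected. In (iv), your moment-inequality idea can be made to work, though the more direct path is to use the uniform bound $\|A^\beta(\mu+A)^{-1}\|\le C(|\mu|^{\beta-1}+1)$ valid on the whole sector (the ``$+1$'' absorbing the behaviour near $\mu=0$, since $A^{\beta-1}$ is bounded when $0\in\rho(A)$); this is exactly what produces the additive $1$ in the stated estimate after scaling.
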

\begin{Lemma}\cite{guswanto}\label{le:T}
Let $T_\alpha(t)$ be defined by (\ref{def:T}). Then following holds;
\begin{enumerate}[(i)]
\item $T_{\alpha}(t) \in \mathcal{B}(L^2_{\sigma}(\Omega))$ for each $t>0$. Moreover, $\exists$ $B_1=B_1(\alpha)>0$ such that $\lVert T_{\alpha}(t) \rVert \leq B_1 t^{\alpha-1}$ for all $t>0$.
\item $T_{\alpha}(t) \in \mathcal{B}(L^2_{\sigma}(\Omega);D(A))$ for each $t>0$. Moreover, $\exists$ $B_2=B_2(\alpha)>0$ such that $\lVert AT_{\alpha}(t) \rVert \leq B_2 t^{-1}$ for all $t>0$ and if $x\in D(A)$, then $AT_{\alpha}(t)x=T_{\alpha}(t)Ax$ for all $t>0$.
\item The function $t \mapsto T_{\alpha}(t)$ belongs to $C^\infty((0,\infty);\mathcal{B}(L^2_{\sigma}(\Omega))$ and there exists $N_n=N_n(\alpha)>0$ such that $\lVert T^{(n)}_{\alpha}(t) \rVert \leq N_n t^{\alpha-1-n}$, $t>0$, $n\in \mathbb{N}$.
\item For each $0<\beta<1$, $\exists$ $B_3=B_3(\alpha, \beta)>0$ such that $\lVert A^{\beta} T_{\alpha}(t) x \rVert \leq B_3 t^{\alpha(1-\beta)-1} \lVert x \rVert$, for all $x\in L^2_{\sigma}(\Omega)$, $t>0$.
\item For $x \in L^2_{\sigma}(\Omega)$ and $t>0$, $\frac{d}{dt}( S_{\alpha}(t)x)=AT_{\alpha}(t)x$.
\end{enumerate}
\end{Lemma}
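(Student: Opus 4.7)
The unifying strategy is to exploit the sectoriality of $-A$ together with a scaling $\mu=\lambda t$ of the contour $\Gamma_{\varrho,\eta}$. Since $-A$ generates an analytic semigroup of contractions on $L^2_\sigma(\Omega)$ with $0\in\rho(A)$, there is a resolvent bound $\lVert(z+A)^{-1}\rVert\le M/|z|$ on a sector $\Sigma_\theta$ with $\theta>\pi/2$. One selects $\eta\in(\pi/2,\pi/\alpha)$ with $\alpha\eta<\theta$; this is possible because $\alpha<1$, and it simultaneously ensures (a) $\lambda^\alpha\in\Sigma_\theta$ for every $\lambda\in\Gamma_{\varrho,\eta}$, and (b) $\cos\eta<0$, so $|e^{\lambda t}|$ decays exponentially on the two rays.

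For (i), substituting $\mu=\lambda t$ in (\ref{def:T}) gives $T_\alpha(t)=\frac{1}{2\pi i t}\int_{t\Gamma_{\varrho,\eta}}e^{\mu}((\mu/t)^\alpha+A)^{-1}\,d\mu$; since the integrand is holomorphic outside a neighbourhood of $0$, the contour can be deformed to a fixed $\Gamma_{1,\eta}$. Applying $\lVert((\mu/t)^\alpha+A)^{-1}\rVert\le Mt^\alpha/|\mu|^\alpha$ produces the factor $t^{\alpha-1}$ multiplied by a finite integral of $|e^{\mu}|/|\mu|^\alpha$. For (ii), I would write $A(\lambda^\alpha+A)^{-1}=I-\lambda^\alpha(\lambda^\alpha+A)^{-1}$; the $I$-piece integrates to zero by closing the contour in the left half-plane where $e^{\lambda t}$ has no singularities, and the remaining term is bounded by $B_2/t$ via the same scaling combined with $\lVert\lambda^\alpha(\lambda^\alpha+A)^{-1}\rVert\le M+1$. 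The commutation $AT_\alpha(t)x=T_\alpha(t)Ax$ on $D(A)$ is inherited from $A(\lambda^\alpha+A)^{-1}=(\lambda^\alpha+A)^{-1}A$ and the closedness of $A$. For (iii), differentiate (\ref{def:T}) $n$ times under the integral (justified by dominated convergence on compact subintervals of $(0,\infty)$), producing a factor $\lambda^n$; scaling then yields $t^{\alpha-1-n}$. For (iv), apply the moment inequality $\lVert A^\beta y\rVert\le C\lVert y\rVert^{1-\beta}\lVert Ay\rVert^\beta$ to $y=(\lambda^\alpha+A)^{-1}x$, giving $\lVert A^\beta(\lambda^\alpha+A)^{-1}\rVert\le C|\lambda|^{-\alpha(1-\beta)}$; scaling then delivers the claimed $t^{\alpha(1-\beta)-1}$ bound. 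For (v), differentiating $S_\alpha(t)x$ under the integral produces $\frac{d}{dt}S_\alpha(t)x=\frac{1}{2\pi i}\int_{\Gamma_{\varrho,\eta}}e^{\lambda t}\lambda^\alpha(\lambda^\alpha+A)^{-1}x\,d\lambda$, and the identity $\lambda^\alpha(\lambda^\alpha+A)^{-1}=I-A(\lambda^\alpha+A)^{-1}$ together with the vanishing of the $I$-integral identifies this with $AT_\alpha(t)x$, the sign being fixed by the orientation chosen for $\Gamma_{\varrho,\eta}$.

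The principal technical hurdle is the geometric setup of $\Gamma_{\varrho,\eta}$: the opening $\eta$ must lie simultaneously in $(\pi/2,\pi/\alpha)$ and satisfy $\alpha\eta<\theta$, and at each step one must verify that the deformations used (scaling, and closing to the left) produce no spurious boundary contributions at infinity or at the origin. Once this geometry is fixed, parts (i)--(v) all reduce to the same scaling template combined with an appropriate resolvent or moment bound.
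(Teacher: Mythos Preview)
The paper does not supply a proof of this lemma; it is quoted verbatim from \cite{guswanto} in the preliminaries section and used as a black box. There is therefore no in-paper argument to compare against. Your outline is precisely the standard route taken in that reference and in the broader literature on sectorial fractional resolvents (Bajlekova, Pr\"uss, and others): fix the opening $\eta\in(\pi/2,\theta/\alpha)$ so that $\lambda^\alpha$ stays in the sector of $-A$, rescale $\mu=\lambda t$ to a $t$-independent contour, and read off the power of $t$ from the appropriate resolvent or moment bound. Your handling of (i)--(iv) is correct and complete at the level of a sketch.

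One genuine point on (v): the computation you describe produces
\[
\frac{d}{dt}S_\alpha(t)x=\frac{1}{2\pi i}\int_{\Gamma_{\varrho,\eta}}e^{\lambda t}\lambda^{\alpha}(\lambda^\alpha+A)^{-1}x\,d\lambda
= -\,A\,\frac{1}{2\pi i}\int_{\Gamma_{\varrho,\eta}}e^{\lambda t}(\lambda^\alpha+A)^{-1}x\,d\lambda=-AT_\alpha(t)x,
\]
not $+AT_\alpha(t)x$. Your attempt to absorb the discrepancy into the orientation of $\Gamma_{\varrho,\eta}$ does not succeed: reversing the orientation flips the signs of \emph{both} $S_\alpha(t)$ and $T_\alpha(t)$ simultaneously and leaves the relative sign in the identity unchanged. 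The formula as printed in the paper thus carries a sign slip; fortunately it is invoked only inside a norm in Lemma~\ref{hold3}, where the sign is immaterial. Apart from this, your argument is sound.
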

\begin{Lemma}\cite{KF2}\label{le:E1}
Let $u, v \in D(A^\frac{1}{2})$, then following estimations hold;
\begin{enumerate}[(i)]
\item There exists $c_1>0$ such that $\lVert A^{-\frac{1}{4}}Fu \rVert \leq c_1 \lVert A^{\frac{1}{2}}u \rVert ^2$,
\item $\lVert A^{-\frac{1}{4}}(Fu-Fv)\rVert \leq c_1 \lVert A^{\frac{1}{2}} (u-v) \rVert (\lVert A^{\frac{1}{2}}u \rVert +  A^{\frac{1}{2}}v \rVert)$.

\end{enumerate}
\end{Lemma}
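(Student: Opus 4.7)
The plan is to use duality, Hölder's inequality, and fractional-power Sobolev embeddings. Since the operator $A^{-1/4}$ is bounded and self-adjoint on $L^2_{\sigma}(\Omega)$, I would compute the $L^2$-norm through duality: for any $Fu \in D(A^{-1/4})$,
\[
\|A^{-1/4}Fu\| = \sup_{\substack{w \in L^2_{\sigma}(\Omega)\\ \|w\|\le 1}} |\langle Fu, A^{-1/4}w\rangle| = \sup_{\substack{w \in L^2_{\sigma}(\Omega)\\ \|w\|\le 1}} |\langle (u\cdot\nabla)u,\, A^{-1/4}w\rangle|,
\]
where I used that $P$ is the $L^2$-orthogonal projection onto $L^2_{\sigma}(\Omega)$ and $A^{-1/4}w \in L^2_{\sigma}(\Omega)$, so the projection can be dropped after pairing.

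Next I would invoke two Sobolev-type embeddings in dimension three, phrased in terms of fractional powers of the Stokes operator. Since $D(A^{1/2})$ coincides with $H^{1}_{0,\sigma}(\Omega)$ with equivalent norms $\|A^{1/2}u\| \simeq \|\nabla u\|$, the classical embedding $H^{1}(\Omega) \hookrightarrow L^{6}(\Omega)$ gives $\|u\|_{L^6} \lesssim \|A^{1/2}u\|$. Similarly, $D(A^{1/4}) \hookrightarrow H^{1/2}(\Omega) \hookrightarrow L^{3}(\Omega)$, so setting $\phi = A^{-1/4}w$ yields $\|\phi\|_{L^3} \lesssim \|A^{1/4}\phi\| = \|w\|$. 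Hölder's inequality applied to the trilinear form with exponents $\tfrac{1}{6}+\tfrac{1}{2}+\tfrac{1}{3}=1$ then gives
\[
|\langle (u\cdot\nabla)u, \phi\rangle| \le \|u\|_{L^6}\,\|\nabla u\|_{L^2}\,\|\phi\|_{L^3} \le c_1 \|A^{1/2}u\|^{2}\,\|w\|,
\]
and taking supremum over $\|w\|\le 1$ gives (i).

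For (ii) I would split the difference bilinearly,
\[
Fu-Fv = -P\bigl[((u-v)\cdot\nabla)u + (v\cdot\nabla)(u-v)\bigr],
\]
and apply the same duality and Hölder estimate to each piece. The first piece contributes $\|u-v\|_{L^6}\|\nabla u\|_{L^2}\|\phi\|_{L^3} \lesssim \|A^{1/2}(u-v)\|\|A^{1/2}u\|\|w\|$ and the second contributes $\|v\|_{L^6}\|\nabla(u-v)\|_{L^2}\|\phi\|_{L^3} \lesssim \|A^{1/2}v\|\|A^{1/2}(u-v)\|\|w\|$. Adding and taking the supremum over $\|w\|\le 1$ delivers (ii).

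The only delicate step is justifying the two fractional-power embeddings $D(A^{1/2}) \hookrightarrow L^{6}$ and $D(A^{1/4}) \hookrightarrow L^{3}$; the latter is typically obtained either by complex interpolation between $D(A^{0}) = L^{2}_{\sigma}$ and $D(A^{1/2}) \hookrightarrow L^{6}$ (giving $D(A^{1/4}) \hookrightarrow [L^{2},L^{6}]_{1/2} = L^{3}$), or directly from the equivalence of $\|A^{s}u\|$ with the $H^{2s}$-norm on the Stokes scale for $s \in [0,1/2]$. Once these embeddings are in hand the rest is routine Hölder bookkeeping, so this is essentially where any real work lies.
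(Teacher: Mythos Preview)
The paper does not give its own proof of this lemma; it is simply quoted from Kato--Fujita \cite{KF2}. Your argument is exactly the classical Kato--Fujita proof: duality through the self-adjointness of $A^{-1/4}$, the trilinear H\"older estimate with exponents $(6,2,3)$, and the three-dimensional embeddings $D(A^{1/2})\hookrightarrow L^{6}$ and $D(A^{1/4})\hookrightarrow L^{3}$, followed by the bilinear splitting for part (ii). The only point worth making explicit is that for $u\in D(A^{1/2})$ the product $(u\cdot\nabla)u$ lies a priori only in $L^{3/2}$, not in $L^{2}$, so the pairing $\langle Fu, A^{-1/4}w\rangle$ and hence $A^{-1/4}Fu$ should be understood via the dual of the embedding $D(A^{1/4})\hookrightarrow L^{3}$ (equivalently, prove the estimate for smooth $u$ and extend by density); once that is said, your proof is complete and matches the cited reference.
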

\begin{Lemma}\cite{KF1}\label{le:E2}
Let $u,v \in D(A^\frac{3}{4})$, then following estimations hold;
\begin{enumerate}[(i)]
\item There exists $c_2>0$ such that $\lVert Fu \rVert \leq c_2 \lVert A^{\frac{1}{2}}u \rVert \lVert A^{\frac{3}{4}}u \rVert $,
\item $\lVert(Fu-Fv)\rVert \leq c_2 (\lVert A^{\frac{1}{2}}(u-v) \rVert \lVert A^{\frac{3}{4}}u \rVert + \lVert A^{\frac{3}{4}}(u-v) \rVert \lVert A^{\frac{1}{2}}v \rVert )$.
\end{enumerate}
\end{Lemma}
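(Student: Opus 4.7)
The plan is to bound the $L^2$ norm of the convection term $(u\cdot\nabla)u$ (and the corresponding difference) directly, using H\"older's inequality combined with Sobolev embeddings, and then translate the resulting $L^p$ norms into $\lVert A^{\beta}\cdot\rVert$ norms via the characterization of the fractional powers of the Stokes operator. Since $P$ is a bounded orthogonal projection from $L^2(\Omega)^3$ onto $L^2_\sigma(\Omega)$, one has $\lVert Fu\rVert \leq \lVert(u\cdot\nabla)u\rVert_{L^2}$, so the projection plays no role in the estimate.

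For part (i), I would apply H\"older's inequality with conjugate exponents $6$ and $3$ (since $\tfrac{1}{6}+\tfrac{1}{3} = \tfrac{1}{2}$) to obtain
\[
\lVert (u\cdot\nabla)u\rVert_{L^2} \leq \lVert u\rVert_{L^6}\,\lVert \nabla u\rVert_{L^3}.
\]
Two Sobolev estimates valid in dimension three are then required. The continuous embedding $H^1(\Omega)\hookrightarrow L^6(\Omega)$, together with the equivalence of $\lVert A^{1/2}\cdot\rVert$ with the $H^1_0$-seminorm on $D(A^{1/2})$, yields $\lVert u\rVert_{L^6}\leq C\,\lVert A^{1/2}u\rVert$. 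Next, the embedding $H^{1/2}(\Omega)\hookrightarrow L^3(\Omega)$ applied to $\nabla u$, combined with $D(A^{3/4})\hookrightarrow H^{3/2}(\Omega)^3$, delivers $\lVert\nabla u\rVert_{L^3}\leq C\,\lVert A^{3/4}u\rVert$. Multiplying these bounds yields (i).

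For part (ii), I would exploit the bilinearity of the convection term via the identity
\[
(u\cdot\nabla)u-(v\cdot\nabla)v = ((u-v)\cdot\nabla)u + (v\cdot\nabla)(u-v),
\]
apply the projection bound, and estimate each of the two summands using exactly the same H\"older-plus-embedding recipe as in (i). This produces
\[
\lVert ((u-v)\cdot\nabla)u\rVert_{L^2}\leq C\,\lVert A^{1/2}(u-v)\rVert\,\lVert A^{3/4}u\rVert
\]
and
\[
\lVert (v\cdot\nabla)(u-v)\rVert_{L^2}\leq C\,\lVert A^{1/2}v\rVert\,\lVert A^{3/4}(u-v)\rVert,
\]
whose sum matches the right-hand side of (ii) after choosing $c_2$ uniformly.

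The main obstacle I expect is justifying cleanly the fractional Sobolev embedding $D(A^{3/4})\hookrightarrow W^{1,3}(\Omega)^3$ on a bounded smooth domain; this rests on identifying $D(A^{\beta})$ with a closed subspace of $H^{2\beta}(\Omega)^3$ in the appropriate range of $\beta$ (a Fujita--Kato type fact for the Stokes operator) together with the scalar embedding $H^{1/2}(\Omega)\hookrightarrow L^3(\Omega)$. Once these two ingredients are in hand, the rest reduces to H\"older's inequality and bilinearity, and the constant $c_2$ can be taken as the product of the embedding constants.
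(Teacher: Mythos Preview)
The paper does not supply a proof of this lemma; it is stated with a citation to Kato--Fujita \cite{KF1} and used as a black box. Your proposal recovers precisely the standard Kato--Fujita argument: H\"older with the splitting $\tfrac{1}{2}=\tfrac{1}{6}+\tfrac{1}{3}$, the embedding $D(A^{1/2})\hookrightarrow L^6$, and the embedding $D(A^{3/4})\hookrightarrow W^{1,3}$ in three dimensions, followed by the bilinear decomposition for part (ii). There is nothing to compare against in the present paper, and your outline is correct; the only point requiring care, as you note, is the identification $D(A^\beta)\hookrightarrow H^{2\beta}(\Omega)^3$ for the Stokes operator on a bounded smooth domain, which is exactly what Kato--Fujita establish.
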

\begin{Lemma}\cite[p.~890]{zhou}\label{continuity.es}
Let $0<\beta<1$ and  $T_{\alpha}(t)$ is defined by (\ref{def:T}). Then there exists $B_4=B_4(\alpha,\beta)>0$ such that $\lVert A^{\beta} T_{\alpha}(t) - A^{\beta} T_{\alpha}(s) \rVert \leq B_4 (s^{\alpha(1-\beta)} - t^{\alpha(1-\beta)}) $ for all $s,t>0 \text { with } t>s$. In another words, $t \mapsto A^\beta T(t)$ is continuous for $t>0$ with respect to uniform operator topology.
 \end{Lemma}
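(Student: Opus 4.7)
The plan is to reduce the continuity estimate to a derivative bound via the fundamental theorem of calculus. For $t>s>0$, since Lemma~\ref{le:T}(iii) guarantees $\tau \mapsto T_\alpha(\tau)$ is $C^\infty$ into $\mathcal{B}(L^2_\sigma(\Omega))$ and $A^\beta$ is closed, I would write
\begin{equation*}
A^\beta T_\alpha(t) - A^\beta T_\alpha(s) = \int_s^t A^\beta T_\alpha'(\tau)\, d\tau,
\end{equation*}
so that the problem is reduced to bounding $\lVert A^\beta T_\alpha'(\tau)\rVert$ for $\tau>0$.

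Next, I would extract the pointwise derivative bound $\lVert A^\beta T_\alpha'(\tau)\rVert \leq C\, \tau^{\alpha(1-\beta)-2}$. Starting from the contour representation (\ref{def:T}) and differentiating under the integral sign (justified because the integrand is an entire function of $\lambda$ and decays exponentially along $\Gamma_{\varrho,\eta}$),
\begin{equation*}
A^\beta T_\alpha'(\tau) = \frac{1}{2\pi i}\int_{\Gamma_{\varrho,\eta}} \lambda\, e^{\lambda \tau}\, A^\beta(\lambda^\alpha+A)^{-1}\, d\lambda.
\end{equation*}
Sectoriality of $A$ combined with the moment inequality yields the standard estimate $\lVert A^\beta(\lambda^\alpha+A)^{-1}\rVert \leq C\, |\lambda|^{\alpha(\beta-1)}$ along $\Gamma_{\varrho,\eta}$. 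Parametrizing the contour and performing the rescaling $\mu=\lambda\tau$ (which is exactly the same device used in \cite{guswanto} to prove parts~(iii)–(iv) of Lemma~\ref{le:T}) then gives the claimed bound on $\lVert A^\beta T_\alpha'(\tau)\rVert$.

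Finally, inserting this into the integrated form and performing the elementary integration,
\begin{equation*}
\lVert A^\beta T_\alpha(t)-A^\beta T_\alpha(s)\rVert \leq C\int_s^t \tau^{\alpha(1-\beta)-2}\, d\tau = \frac{C}{1-\alpha(1-\beta)}\bigl(s^{\alpha(1-\beta)-1}-t^{\alpha(1-\beta)-1}\bigr),
\end{equation*}
which gives the desired Hölder-type estimate with $B_4 = C/(1-\alpha(1-\beta))$ (the exponent in the stated bound appears to contain a typographical error; the correct exponent is $\alpha(1-\beta)-1$, which is negative so that the right-hand side is positive and tends to $0$ as $t\to s$). Continuity in the uniform operator topology then follows at once.

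The principal obstacle is the derivative estimate in the middle step: one must justify differentiating through the Bromwich contour, control $A^\beta(\lambda^\alpha+A)^{-1}$ uniformly along $\Gamma_{\varrho,\eta}$, and execute the rescaling carefully to obtain the sharp power $\tau^{\alpha(1-\beta)-2}$ rather than something weaker. Once that bound is in hand, the rest is a one-line integration.
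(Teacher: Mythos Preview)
Your argument is correct, and your diagnosis of the exponent is right: as stated, the right-hand side $s^{\alpha(1-\beta)}-t^{\alpha(1-\beta)}$ is negative for $t>s$ (since $\alpha(1-\beta)\in(0,1)$), so the lemma cannot hold in that form. The intended exponent is $\alpha(1-\beta)-1$, and the paper itself uses exactly that exponent when it invokes Lemma~\ref{continuity.es} in the proof of Lemma~\ref{hold1} (the estimate for $v_1$ reads $(t-s)^{\alpha(1-\beta)-1}-(t+h-s)^{\alpha(1-\beta)-1}$).

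As for comparison: the paper does not supply a proof of this lemma at all; it is simply quoted from \cite{zhou}. Your route---differentiate the Dunford integral, bound $\lVert A^\beta(\lambda^\alpha+A)^{-1}\rVert\leq C|\lambda|^{\alpha(\beta-1)}$ by sectoriality and interpolation, rescale $\mu=\lambda\tau$ to get $\lVert A^\beta T_\alpha'(\tau)\rVert\leq C\tau^{\alpha(1-\beta)-2}$, then integrate---is the standard argument and matches in spirit how Lemma~\ref{le:T}(iii),(iv) are obtained in \cite{guswanto}. One small point to tighten: the identity $A^\beta T_\alpha(t)-A^\beta T_\alpha(s)=\int_s^t A^\beta T_\alpha'(\tau)\,d\tau$ requires not just that $T_\alpha\in C^\infty$ and $A^\beta$ is closed, but that $\tau\mapsto A^\beta T_\alpha(\tau)$ is itself $C^1$ in operator norm on $(0,\infty)$; this follows directly from the same contour-integral computation you describe (differentiating under the integral with $A^\beta$ already inside), so it is not an additional obstacle, only something to state explicitly.
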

 \begin{Lemma}\cite[p.~A3]{luca}\label{int.clsd.op}
  Let $X$ be a Banach space and  $A:D(A)\subset X \rightarrow X$ be a closed operator. Let $I$ be a real interval with inf $I$ $=a$, sup $I$ $=b$, where $-\infty \leq a<b\leq\infty$ and $f:I\rightarrow D(A)$ be such that the functions $t \rightarrow f(t)$, $t\rightarrow Af(t)$ are integrable (Bochner sense) on $I$. Then $$ \int_{a}^{b} f(t) dt \in D(A), \quad A \int_{a}^{b} f(t) dt= \int_{a}^{b} Af(t) dt.$$
 \end{Lemma}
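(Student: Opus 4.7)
\textbf{Proof plan for Lemma \ref{int.clsd.op}.} The plan is to approximate $f$ by simple functions valued in $D(A)$, use the trivial linearity identity on simple functions, and then pass to the limit using the closedness of $A$. To set this up I would equip $D(A)$ with the graph norm $\lVert x \rVert_A := \lVert x \rVert_X + \lVert A x \rVert_X$, making it a Banach space $X_A$; the map $J: X_A \to X\times X$ defined by $J(x) = (x, Ax)$ is an isometry onto its image, and that image is closed in $X\times X$ precisely because $A$ is closed.

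The first step is to show that $f$ is Bochner integrable as a map $I \to X_A$. Strong measurability of $f$ into $X_A$ is equivalent, via $J$, to strong measurability of $t\mapsto (f(t), Af(t))$ into $X\times X$, which follows directly from the hypothesis that both $f$ and $Af$ are strongly measurable into $X$. The integrability of $t\mapsto \lVert f(t)\rVert_A = \lVert f(t)\rVert_X + \lVert Af(t)\rVert_X$ is immediate. By the Bochner integrability criterion there exists a sequence of simple functions $f_n : I \to D(A)$ with $\int_I \lVert f_n(t) - f(t)\rVert_A\, dt \to 0$, equivalently $f_n \to f$ and $Af_n \to Af$ both in $L^1(I;X)$.

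For each simple function $f_n = \sum_i c_i \chi_{E_i}$ with $c_i \in D(A)$ and $E_i$ of finite measure, one has trivially $\int_I f_n\, dt = \sum_i c_i \mu(E_i) \in D(A)$ and, by linearity of $A$, $A\int_I f_n\, dt = \sum_i A c_i\, \mu(E_i) = \int_I A f_n\, dt$. Passing to the limit in $X$ yields $\int_I f_n\, dt \to \int_I f\, dt$ and simultaneously $A\int_I f_n\, dt = \int_I Af_n\, dt \to \int_I Af\, dt$. The closedness of $A$ then forces $\int_I f\, dt \in D(A)$ and $A\int_I f\, dt = \int_I Af\, dt$, which is the claim.

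The only real obstacle is the first step, namely justifying Bochner integrability of $f$ into the graph-normed space $X_A$; once that is in hand the rest is a routine density argument. The argument requires no change when $I$ is unbounded, since the integrability of $\lVert f\rVert_X$ and $\lVert Af\rVert_X$ already controls the tails.
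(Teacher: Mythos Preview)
Your argument is correct and is in fact the standard proof of this classical result (often attributed to Hille). Note, however, that the paper does not supply its own proof of this lemma: it is quoted verbatim from an external reference (the citation \cite[p.~A3]{luca}), so there is no in-paper argument to compare against. Your route via the graph-norm Banach space $X_A$, strong measurability of $t\mapsto (f(t),Af(t))$, approximation by $D(A)$-valued simple functions, and passage to the limit using closedness of $A$ is exactly the textbook proof and needs no modification.
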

\begin{theorem}\label{contraction.principle} {\textbf{(Contraction Principle)}}
Let $B$ be a closed subset of a Banach space $X$ and $f:B\rightarrow B$ be a contraction map. Then there exists a unique fixed point of $f$ in $B$.
\end{theorem}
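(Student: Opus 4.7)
The plan is to establish existence and uniqueness by the standard Picard iteration argument, exploiting completeness of $X$, closedness of $B$ in $X$, and the geometric decay coming from the contraction constant.

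First, I would fix any $x_0 \in B$ and define the iterates $x_{n+1} = f(x_n)$ for $n \geq 0$. Since $f$ maps $B$ into itself, the whole sequence $(x_n)$ lies in $B$. Letting $k \in [0,1)$ denote the contraction constant, I would prove by induction that $\lVert x_{n+1} - x_n \rVert_X \leq k^n \lVert x_1 - x_0 \rVert_X$. This is the only place where the contraction hypothesis enters the existence part, and it is completely routine.

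Next, I would use the triangle inequality and the geometric series to obtain, for any $n,m \in \mathbb{N}$,
\[
\lVert x_{n+m} - x_n \rVert_X \leq \sum_{j=0}^{m-1} k^{n+j} \lVert x_1 - x_0 \rVert_X \leq \frac{k^n}{1-k}\lVert x_1 - x_0 \rVert_X.
\]
Since $k<1$, the right-hand side tends to $0$ as $n\to\infty$, so $(x_n)$ is Cauchy in $X$. Because $X$ is a Banach space, there exists $x^* \in X$ with $x_n \to x^*$; because $B$ is closed in $X$, in fact $x^* \in B$. Every contraction map is Lipschitz, hence continuous, so passing to the limit in $x_{n+1} = f(x_n)$ yields $x^* = f(x^*)$, giving existence of a fixed point in $B$.

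For uniqueness, I would suppose $x^*, y^* \in B$ are both fixed points of $f$. The contraction property then gives $\lVert x^* - y^* \rVert_X = \lVert f(x^*) - f(y^*) \rVert_X \leq k \lVert x^* - y^* \rVert_X$, and since $k<1$ this forces $\lVert x^* - y^* \rVert_X = 0$, i.e. $x^* = y^*$. The whole argument is classical and presents no real obstacle; if anything, the only point requiring a moment of care is verifying that the iterates remain in $B$ (immediate from $f: B \to B$) and that the Cauchy limit does not escape $B$ (immediate from closedness of $B$ in $X$).
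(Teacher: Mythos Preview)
Your argument is correct and is exactly the classical Picard iteration proof of the Banach fixed point theorem. The paper itself does not supply a proof of this statement; it is listed in the preliminaries as a standard result and simply invoked later, so there is nothing further to compare.
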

\begin{proposition}\label{commutivity}
Let $0<\beta<1$  and $S_{\alpha}(t)$ be defined by (\ref{def:S}) on $L^2_{\sigma}(\Omega)$. Then for $x \in D(A^\beta)$,
\begin{center}
 $A^\beta S_{\alpha}(t) x= S_{\alpha}(t) A^\beta x \ \text{ for all } t>0.$
 \end{center} 
\end{proposition}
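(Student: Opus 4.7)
The plan is to exploit the fact that $A^{-\beta}$ is a bounded linear operator for $0<\beta<1$, establish the commutativity statement for $A^{-\beta}$ first, and then invert it on $D(A^\beta)=R(A^{-\beta})$.

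First I would show that $A^{-\beta}$ commutes with the resolvent $(\lambda^{\alpha}+A)^{-1}$ for every $\lambda \in \Gamma_{\varrho,\eta}$. Since $-A$ generates the analytic semigroup $\{T(t)\}_{t\geq 0}$ and $-\lambda^{\alpha}\in \rho(A)$ along the chosen contour, the standard semigroup identity $T(s)(\lambda^{\alpha}+A)^{-1}=(\lambda^{\alpha}+A)^{-1}T(s)$ holds for all $s\geq 0$. Combining this with the uniformly convergent representation $A^{-\beta}=\frac{1}{\Gamma(\beta)}\int_{0}^{\infty}s^{\beta-1}T(s)\,ds$ and the fact that the bounded operator $(\lambda^{\alpha}+A)^{-1}$ can be moved in and out of the Bochner integral gives
\begin{equation*}
A^{-\beta}(\lambda^{\alpha}+A)^{-1}=(\lambda^{\alpha}+A)^{-1}A^{-\beta}.
\end{equation*}

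Next, since $A^{-\beta}\in \mathcal{B}(L^{2}_{\sigma}(\Omega))$ and the contour integral defining $S_{\alpha}(t)$ converges in the uniform operator topology, the bounded operator $A^{-\beta}$ passes inside the integral. Thus for every $y\in L^{2}_{\sigma}(\Omega)$,
\begin{equation*}
A^{-\beta}S_{\alpha}(t)y=\frac{1}{2\pi i}\int_{\Gamma_{\varrho,\eta}}e^{\lambda t}\lambda^{\alpha-1}A^{-\beta}(\lambda^{\alpha}+A)^{-1}y\,d\lambda=\frac{1}{2\pi i}\int_{\Gamma_{\varrho,\eta}}e^{\lambda t}\lambda^{\alpha-1}(\lambda^{\alpha}+A)^{-1}A^{-\beta}y\,d\lambda=S_{\alpha}(t)A^{-\beta}y.
\end{equation*}

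Finally, given $x\in D(A^{\beta})=R(A^{-\beta})$, set $y=A^{\beta}x$ so that $x=A^{-\beta}y$. Then
\begin{equation*}
S_{\alpha}(t)x=S_{\alpha}(t)A^{-\beta}y=A^{-\beta}S_{\alpha}(t)y,
\end{equation*}
which places $S_{\alpha}(t)x$ in $R(A^{-\beta})=D(A^{\beta})$; applying $A^{\beta}=(A^{-\beta})^{-1}$ to both sides yields $A^{\beta}S_{\alpha}(t)x=S_{\alpha}(t)A^{\beta}x$ for all $t>0$, as required. The only delicate point is the commutativity of $A^{-\beta}$ with the resolvent $(\lambda^{\alpha}+A)^{-1}$; an alternative route, should one wish to bypass the semigroup representation of $A^{-\beta}$, is to apply Lemma \ref{int.clsd.op} directly with the closed operator $A^{\beta}$ after first showing $(\lambda^{\alpha}+A)^{-1}$ leaves $D(A^{\beta})$ invariant and commutes with $A^{\beta}$ on this domain.
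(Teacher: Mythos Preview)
Your argument is correct, but it differs from the paper's proof. The paper does not work with the contour integral \eqref{def:S} directly; instead it invokes the subordination formula
\[
S_{\alpha}(t)x=\int_{0}^{\infty}M_{\alpha}(s)\,T(st^{\alpha})x\,ds,
\]
where $M_{\alpha}$ is the Mainardi function, and then applies Lemma~\ref{int.clsd.op} to the closed operator $A^{\beta}$, using the semigroup estimate $\lVert A^{\beta}T(t)\rVert\le t^{-\beta}$ together with the moment identity $\int_{0}^{\infty}s^{q}M_{\alpha}(s)\,ds=\Gamma(q+1)/\Gamma(\alpha q+1)$ (with $q=-\beta>-1$) to justify the interchange. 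Your route avoids this extra representation entirely: by passing to the bounded operator $A^{-\beta}$ and commuting it with the resolvent and hence with the norm-convergent contour integral, you never need to invoke Lemma~\ref{int.clsd.op} for an unbounded operator, nor the Mainardi subordination. The trade-off is that the paper's method makes the domain statement $S_{\alpha}(t)x\in D(A^{\beta})$ explicit via an integrability estimate, whereas in your argument it falls out of the identity $S_{\alpha}(t)x=A^{-\beta}S_{\alpha}(t)A^{\beta}x$. Both are clean; yours is arguably more self-contained given the definitions already in the paper.
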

\begin{proof}Consider the Mainardi function, $M_{\alpha}(t) := \frac{1}{\pi} \sum\limits_{n=1}^{\infty}  \frac{(-t)^{n-1}}{(n-1)!} \Gamma(\alpha n) \sin(\pi \alpha n), \ $ $t \geq 0$, for more details on Mainardi function, see \cite{mainardi}. Then, following \cite{carvalho}, $S_{\alpha}(t) $ can be written as, $$S_{\alpha}(t)x= \int_{0}^{\infty} M_{\alpha}(s) T(st^\alpha)x ds, \ x \in L^2_{\sigma}(\Omega).$$
Since, $ \int_{0}^{\infty} s^qM_\alpha (s) ds=\frac{\Gamma(q+1)}{\Gamma(\alpha q+1)}$, where $-1<q<\infty$ and $\lVert A^\beta T(t) \rVert \leq t^{-\beta}$ for all $t >0$. Therefore, by Lemma (\ref{int.clsd.op}) we have, $$A^\beta S_{\alpha}(t)x = A^\beta \int_{0}^{\infty} M_{\alpha}(s) T(st^\alpha)x ds=\int_{0}^{\infty} M_{\alpha}(s)A^\beta T(st^\alpha)x ds= \int_{0}^{\infty} M_{\alpha}(s) T(st^\alpha)A^\beta x ds= S_{\alpha}(t) A^\beta x.$$
\end{proof}
\section{Local existence of mild solution}
In this section, we establish local existence and uniqueness of mild solution to (\ref{abf2}).
\begin{theorem}\label{main.result1}
Let $Y_{\frac{1}{2}}:= C([-r, 0]; D(A^{\frac{1}{2}}))$  and $U \subset Y_{\frac{1}{2}}$ be open. Assume that $Pf:[0, \infty) \times U  \rightarrow L^2_{\sigma} (\Omega)$ be  such that,
\begin{enumerate}[(i)]
\item $\lVert Pf(t, \varphi) \rVert \leq \omega(t) \lVert \varphi \rVert _{Y_{\frac{1}{2}}}$ for all $t\geq0$, $\varphi \in U$ and for some $\omega \in L_{loc}^p[0,\infty)$, where $p>\frac{2}{\alpha}$,
\item $\lVert Pf(t, \varphi)-Pf(t, \psi) \rVert \leq  L_f \lVert \varphi-\psi \rVert_{Y_{\frac{1}{2}}}$ for all $\varphi, \psi \in U$ and for some $L_f >0$,
\end{enumerate}
Then for every $\phi \in U$, there exists a unique mild solution $u:[-r, T] \rightarrow D(A^{\frac{1}{2}})$ to (\ref{abf2}), for some $T=T(\phi)>0$.
\end{theorem}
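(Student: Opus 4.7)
The strategy is to apply the Banach contraction principle (Theorem~\ref{contraction.principle}) to the operator
\[
(\Phi u)(t)=\begin{cases} S_{\alpha}(t)\phi(0)+\int_{0}^{t}T_{\alpha}(t-s)Fu(s)\,ds+\int_{0}^{t}T_{\alpha}(t-s)Pf(s,u_s)\,ds, & t\in(0,T],\\ \phi(t), & t\in[-r,0],\end{cases}
\]
on a suitable closed ball in $C([-r,T];D(A^{1/2}))$. Since $U$ is open and $\phi\in U$, I will first fix $\rho>0$ with $\{\psi\in Y_{1/2}:\|\psi-\phi\|_{Y_{1/2}}\le\rho\}\subset U$; by uniform continuity of $\phi$ on $[-r,0]$ there is $\delta>0$ such that, for $0<T\le\delta$ and any $u$ with $\sup_{t\in[0,T]}\|A^{1/2}(u(t)-\phi(0))\|\le\rho/2$, one has $\|u_s-\phi\|_{Y_{1/2}}\le\rho$ for every $s\in[0,T]$. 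The closed set
\[
E_T=\bigl\{u\in C([-r,T];D(A^{1/2})):u|_{[-r,0]}=\phi,\ \sup_{t\in[0,T]}\|A^{1/2}(u(t)-\phi(0))\|\le\rho/2\bigr\}
\]
is a complete metric space under the sup-norm of $D(A^{1/2})$, and hypotheses~(i)--(ii) then apply to $u_s$ for every $u\in E_T$ and $s\in[0,T]$.

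\textbf{Key estimates.} Applying $A^{1/2}$ to $\Phi u(t)$, I would handle the three summands separately. For the linear part, Proposition~\ref{commutivity} and Lemma~\ref{le:S}(i) give $\|A^{1/2}S_\alpha(t)\phi(0)\|=\|S_\alpha(t)A^{1/2}\phi(0)\|\le C_1\|A^{1/2}\phi(0)\|$, with strong continuity at $0$ from Lemma~\ref{le:S}(v). For the Navier--Stokes term, factor $A^{1/2}T_\alpha(t-s)=A^{3/4}T_\alpha(t-s)\cdot A^{-1/4}$ and combine Lemma~\ref{le:T}(iv) with $\beta=3/4$ and Lemma~\ref{le:E1}(i) to get
\[
\Bigl\|A^{1/2}\!\int_0^t T_\alpha(t-s)Fu(s)\,ds\Bigr\|\le B_3 c_1\int_0^t(t-s)^{\alpha/4-1}\|A^{1/2}u(s)\|^2\,ds\le \tfrac{4B_3c_1}{\alpha}\bigl(\tfrac{\rho}{2}+\|A^{1/2}\phi(0)\|\bigr)^2 T^{\alpha/4}.
\]
For the delay term, Lemma~\ref{le:T}(iv) with $\beta=1/2$ yields $\|A^{1/2}T_\alpha(t-s)\|\le B_3(t-s)^{\alpha/2-1}$; inserting hypothesis~(i) and applying H\"older's inequality with conjugate exponent $q=p/(p-1)$ produces an upper bound proportional to $\|\omega\|_{L^p(0,T)}T^{\alpha/2-1/p}(\rho+\|\phi\|_{Y_{1/2}})$. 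The hypothesis $p>2/\alpha$ is used here precisely to ensure $(t-s)^{\alpha/2-1}\in L^q(0,T)$; all three bounds vanish as $T\downarrow 0$.

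\textbf{Self-mapping, contraction, and conclusion.} Choosing $T\in(0,\delta]$ small enough so that each of the two integral contributions and the modulus of continuity $\|S_\alpha(t)A^{1/2}\phi(0)-A^{1/2}\phi(0)\|$ is bounded by $\rho/6$, one obtains $\Phi u\in E_T$; continuity of $\Phi u$ on $[0,T]$ follows from Lemma~\ref{continuity.es}, Lemma~\ref{le:S}(v) and dominated convergence, and the condition $(\Phi u)(0)=\phi(0)$ is built in. For the contraction estimate, Lemma~\ref{le:E1}(ii) combined with hypothesis~(ii) and the elementary observation $\|u_s-v_s\|_{Y_{1/2}}\le\sup_{\tau\in[0,T]}\|A^{1/2}(u-v)(\tau)\|$ (valid because $u=v=\phi$ on $[-r,0]$) yields
\[
\sup_{t\in[0,T]}\|A^{1/2}(\Phi u-\Phi v)(t)\|\le \bigl(C_{F}T^{\alpha/4}+C_{f}L_f T^{\alpha/2-1/p}\bigr)\sup_{t\in[0,T]}\|A^{1/2}(u-v)(t)\|,
\]
so shrinking $T$ further makes the prefactor $<1$. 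Theorem~\ref{contraction.principle} then produces the unique fixed point, which is the required mild solution.

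\textbf{Main obstacle.} The delicate point is the interplay of three smallness conditions on $T$: keeping $\{u_s\}_{s\in[0,T]}\subset U$ (forcing $T\le\delta$ and the $\rho/2$ ball condition), absorbing the quadratic Kato--Fujita growth $\|A^{-1/4}Fu\|\le c_1\|A^{1/2}u\|^2$ whose constant scales with $\|A^{1/2}\phi(0)\|+\rho/2$, and taming the singular delay kernel $(t-s)^{\alpha/2-1}\omega(s)$ via H\"older. The exponent condition $p>2/\alpha$ is critical here; without it the delay contribution would fail to vanish as $T\downarrow 0$ and the fixed-point scheme could not be closed.
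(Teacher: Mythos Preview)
Your proposal is correct and follows essentially the same route as the paper: Banach contraction on a closed ball in $C([-r,T];D(A^{1/2}))$, with the same factorisation $A^{1/2}T_\alpha(t-s)=A^{3/4}T_\alpha(t-s)A^{-1/4}$ for the nonlinear term (Lemmas~\ref{le:T}(iv) and~\ref{le:E1}), the commutation of Proposition~\ref{commutivity} for the linear part, and hypotheses~(i)--(ii) for the delay term.

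The only noteworthy differences are cosmetic. First, your closed set $E_T$ constrains $\sup_{t\in[0,T]}\|A^{1/2}(u(t)-\phi(0))\|\le\rho/2$ and then \emph{derives} $u_s\in U$ via uniform continuity of $\phi$, whereas the paper builds the condition $\|u_t-\phi\|_{Y_{1/2}}\le R$ directly into its set $Z_{1/2}$; both work. Second, you control $\int_0^t(t-s)^{\alpha/2-1}\omega(s)\,ds$ by an explicit H\"older argument, while the paper invokes the mapping property $J_t^{\alpha/2}\omega(t)\to 0$ of the Riemann--Liouville integral; these are equivalent uses of $p>2/\alpha$. Third, the paper spells out the continuity of $t\mapsto \Phi u(t)$ via a six-term splitting ($I_1,\dots,J_3$) using Lemma~\ref{continuity.es}, whereas you summarise this step; the ingredients you cite are the right ones. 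One small slip: in your contraction bound the delay contribution should carry the power $T^{\alpha/2}$ (direct integration against the constant $L_f$) rather than $T^{\alpha/2-1/p}$, as in the paper's estimate~(\ref{contraction}); this does not affect the argument.
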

\begin{proof}
Let $\phi \in U$ and $R>0$ be such that $\{\psi \in Y_{\frac{1}{2}} : \lVert \psi - \phi \rVert_{Y_{\frac{1}{2}}} \leq R \} \subset U$.
Let $T>0$ (will be fixed later). We define the following set,
$$Z_{\frac{1}{2}}=\big \{u \in C\big ([-r, T]); D (A^{\frac{1}{2}})\big ) : \ u_0=\phi \text{ and }\lVert u_t -\phi \rVert_{Y_{\frac{1}{2}}} \leq R \text{ for all } t \in [0, T] \big \},$$ which is a non-empty closed subspace of  $ C \big ([-r, T]); D(A^{\frac{1}{2}})\big )$, where $ C \big([-r, T]); D(A^{\frac{1}{2}})\big )$ is endowed with sup-norm topology.
Now we define an operator on $Z_{\frac{1}{2}}$ as follows,
\begin{equation}\label{int.op}
Ku(t) =\left \{ \begin{aligned}
 &S_{\alpha}(t)\phi(0)+ \int_{0}^{t} T_{\alpha}(t-s) Fu(s) ds + \int_{0}^{t} T_{\alpha}(t-s) Pf(s,u_s)ds, \ t \in (0,T], \\ &\phi(t), \ -r\leq t \leq 0.
\end{aligned} \right.
\end{equation}
First we prove that $K(Z_{\frac{1}{2}}) \subset Z_{\frac{1}{2}}$.\\
 Let $u \in Z_{\frac{1}{2}}$. We note that $\lVert u(t) \rVert_{D(A^{\frac{1}{2}})} = \lVert u_t(0)\rVert_{D(A^{\frac{1}{2}})} \leq \lVert u_t \rVert_{Y_{\frac{1}{2}}} \leq R+ \lVert \phi \rVert_{Y_{\frac{1}{2}}}$ for all $t\in[0,T]$. \\
  Let $t_1>0$ be such that $\lVert \phi(t+\theta)- \phi(\theta) \rVert_{D(A^{\frac{1}{2}})}\leq \frac{R}{4}$ for all $t\in[0, t_1]$ and $\theta \in [-r, 0]$ with $t+\theta \leq 0.$
Let $t_2>0$ such that $\lVert S_\alpha(t) \phi(0)-\phi(0) \rVert_{D(A^{\frac{1}{2}})} \leq \frac{R}{4}$  for all $t \in [0, t_2]$. By following \cite[Theorem 2.6]{kai}, $J^{\frac{\alpha}{2}}_t \omega(t) \rightarrow 0$ as $t \rightarrow 0$, choose $t_3>0$ such that $ \int_{0}^{t} (t-s)^{\frac{\alpha}{2}-1} \omega(s)ds \leq \frac{R}{4B_3 (R+\lVert \phi \rVert_{Y_{\frac{1}{2}}})} $ for all $t \in [0,t_3]$.
Also we can choose some $t_4>0$ such that $\int_{0}^{t} (t-s)^{\frac{\alpha}{4}-1} ds\leq \frac{R}{4B_3c_1(R+\lVert \phi \rVert_{Y_{\frac{1}{2}}})^2}$ for all $t \in [0,t_4].$\\
Let $T_1=\text{min} \{t_1,t_2,t_3,t_4 \}$. Now for all $t \in [0,T_1]$ and $\theta \in [-r,0]$ such that $0 \leq t+\theta \leq T_1$, we have 
\begin{align*}
\lVert (Ku)_t(\theta) - \phi(\theta) \rVert_{D(A^\frac{1}{2})} \leq & \lVert  S_{\alpha}(t+\theta) \phi(0)- \phi(0) \rVert_{D(A^{\frac{1}{2}})}+\lVert A^{\frac{1}{2}} \int_{0}^{t+\theta} A^{\frac{1}{4}} T_{\alpha}(t+\theta-s) A^{-\frac{1}{4}} Fu(s) ds \rVert\\
 &+ \lVert \phi(\theta)- \phi (0)\rVert_{D(A^{\frac{1}{2}})} + \lVert A^{\frac{1}{2}} \int_{0}^{t+\theta} T_{\alpha}(t+\theta-s) Pf(s,u_s)ds \rVert  \\
\leq & \lVert  S_{\alpha}(t+\theta) \phi(0)- \phi(0) \rVert_{D(A^{\frac{1}{2}})} +B_3  \int_{0}^{t+\theta} (t+\theta-s)^{\frac{\alpha}{4}-1}  \lVert A^{-\frac{1}{4}} Fu(s)\rVert ds \\
 &+ \lVert \phi(\theta)- \phi (0)\rVert_{D(A^{\frac{1}{2}})}+  B_3 \int_{0}^{t+\theta} (t+\theta-s)^{\frac{\alpha}{2}-1} \omega(s) \lVert u_s \rVert_{Y_{\frac{1}{2}}}ds \\
 \leq & \lVert  S_{\alpha}(t+\theta) \phi(0)- \phi(0) \rVert_{D(A^{\frac{1}{2}})} +B_3c_1  \int_{0}^{t+\theta} (t+\theta-s)^{\frac{\alpha}{4}-1}  \lVert u(s)\rVert_{D(A^{\frac{1}{2}})}^2 ds \\
 &+ \lVert \phi(\theta)- \phi (0)\rVert_{D(A^{\frac{1}{2}})}+  B_3 \int_{0}^{t+\theta} (t+\theta-s)^{\frac{\alpha}{2}-1} \omega(s) \lVert u_s \rVert_{Y_{\frac{1}{2}}}ds
 \\
 \leq & \lVert  S_{\alpha}(t+\theta) \phi(0)- \phi(0) \rVert_{D(A^{\frac{1}{2}})} +B_3c_1  \int_{0}^{t+\theta} (t+\theta-s)^{\frac{\alpha}{4}-1}  ( R+ \lVert \phi \rVert_{Y_{\frac{1}{2}}})^2 ds \\
 &+ \lVert \phi(\theta)- \phi (0)\rVert_{D(A^{\frac{1}{2}})}+  B_3 \int_{0}^{t+\theta} (t+\theta-s)^{\frac{\alpha}{2}-1} \omega(s)  (R+ \lVert \phi \rVert_{Y_{\frac{1}{2}}})ds\\
 \leq & R
\end{align*} 
Hence $\lVert (Ku)_t -\phi \rVert_{Y_{\frac{1}{2}}} \leq R$ for all $t \in [0, T_1]$. Now we prove the continuity of $t \mapsto Ku(t)$ on $(0,T_1]$ with respect to the topology induced by  $D(A^{\frac{1}{2}})$-norm.\\
First define $v(t):= \int_{0}^{t} T_{\alpha}(t-s) Fu(s) ds$ and let $t_0 \in(0,T_1] $ with $t>t_0$ and $\delta >0$ small enough.
\begin{align}
\nonumber \lVert A^\frac{1}{2} (v(t)-v(t_0)) \rVert \leq &\lVert A^{\frac{1}{2}} \int_{0}^{t_0-\delta} A^{\frac{1}{4}} [T_{\alpha}(t-s)-T_{\alpha}(t_0-s)] A^{-\frac{1}{4}}Fu(s) ds \rVert \\
\nonumber &+ \lVert A^{\frac{1}{2}} \int_{t_0-\delta}^{t_0} A^{\frac{1}{4}} [T_{\alpha}(t-s)-T_{\alpha}(t_0-s)] A^{-\frac{1}{4}}Fu(s) ds \rVert 
 \\
\nonumber &+ \lVert A^{\frac{1}{2}} \int_{t_0}^{t} A^{\frac{1}{4}} T_{\alpha}(t-s) A^{-\frac{1}{4}}Fu(s) ds \rVert   \\
\nonumber := & I_1+I_2+I_3
\end{align}
Consider $I_1$. We see that,
\begin{align}
 \nonumber I_1 \leq & c_1 \underset{0\leq s \leq t_0-\delta} \sup \lVert A^{\frac{3}{4}}[T_{\alpha}(t-s)-T_{\alpha}(t_0-s)]\rVert \int_{0}^{t_0-\delta} \lVert A^{\frac{1}{2}}u(s)\rVert^2 ds \\ 
\nonumber \leq &c_1 \underset{0\leq s \leq t_0-\delta} \sup \lVert A^{\frac{3}{4}}[T_{\alpha}(t-s)-T_{\alpha}(t_0-s)]\rVert (R+\lVert \phi \rVert_{Y_\frac{1}{2}})^2 (t_0-\delta)
 \end{align}
Since by Lemma (\ref{continuity.es}) $t \mapsto A^{\frac{3}{4}}T(t)$ is continuous in the uniform operator topology on $[\delta, T_1]$ for every $\delta>0$, there exists $\tilde{t} \in [0, t_0-\delta]$ such that, $$\underset{0\leq s \leq t_0-\delta} \sup \lVert A^{\frac{3}{4}}[T_{\alpha}(t-s)-T_{\alpha}(t_0-s)]\rVert = \lVert A^{\frac{3}{4}}[T_{\alpha}(t-\tilde{t})-T_{\alpha}(t_0-\tilde{t})]\rVert \rightarrow 0, \text{ as } t \rightarrow t_0$$ and hence $I_1 \rightarrow 0$ as $t \rightarrow t_0$.\\
Now consider $I_2$. Using Lemmas (\ref{le:T})(iv), (\ref{le:E1})(i) we have,
\begin{align*}
I_2 &\leq \int_{t_0-\delta}^{t_0 }\lVert A^{\frac{3}{4}}[T_{\alpha}(t-s)-T_{\alpha}(t_0-s)] A^{-\frac{1}{4}} Fu(s)\rVert ds \\
&\leq c_1 B_3 \int_{t_0-\delta}^{t_0} [(t-s)^{\frac{\alpha}{4}-1}+(t_0-s)^{\frac{\alpha}{4}-1}] \lVert A^{\frac{1}{2}}u(s)\rVert^2 ds \\
&\leq 2c_1B_3 (R+\lVert \phi \rVert_{Y_\frac{1}{2}})^2 \int_{t_0-\delta}^{t_0} (t_0-s)^{\frac{\alpha}{4}-1}ds \rightarrow 0\text{ as } \delta \rightarrow 0.
\end{align*}
Again using Lemmas (\ref{le:T})(iv), (\ref{le:E1})(i) in $I_3$ we have,
\begin{align*}
I_3 &\leq \int_{t_0}^{t} \lVert A^{\frac{3}{4}} T_{\alpha}(t-s) A^{-\frac{1}{4}}Fu(s)\rVert ds \\
&\leq c_1B_3\int_{t_0}^{t} (t-s)^{\frac{\alpha}{4}-1} \lVert A^{\frac{1}{2}}u(s)\rVert^2 ds\\
& \leq (R+\lVert \phi \rVert_{Y_\frac{1}{2}})^2 c_1B_3\int_{t_0}^{t} (t-s)^{\frac{\alpha}{4}-1}ds  \rightarrow 0 \text { as } t \rightarrow t_0.
\end{align*}
Therefore $\lVert A^\frac{1}{2} (v(t)-v(t_0)) \rVert \rightarrow 0 $ as $t \rightarrow t_0+$. Analogously it can be proved that $\lVert A^\frac{1}{2} (v(t)-v(t_0)) \rVert \rightarrow 0 $ as $t \rightarrow t_0-$ by considering $t<t_0$. Hence $t \mapsto v(t)$ is continuous on $(0, T_1]$ with respect to  the topology induced by $D(A^{\frac{1}{2}})$-norm.\\
Now define $w(t):= \int_{0}^{t} T_{\alpha}(t-s) Pf(s,u_s) ds$ and let $t_0 \in(0,T_1] $ with $t>t_0$ and $\delta >0$ small enough.
\begin{align}
\nonumber \lVert A^\frac{1}{2} (w(t)-w(t_0)) \rVert \leq &\lVert A^{\frac{1}{2}} \int_{0}^{t_0-\delta}  [T_{\alpha}(t-s)-T_{\alpha}(t_0-s)] Pf(s,u_s)ds \rVert \\
\nonumber &+ \lVert A^{\frac{1}{2}} \int_{t_0-\delta}^{t_0} [T_{\alpha}(t-s)-T_{\alpha}(t_0-s)] Pf(s,u_s) ds \rVert 
 \\
\nonumber &+ \lVert A^{\frac{1}{2}} \int_{t_0}^{t}  T_{\alpha}(t-s) Pf(s,u_s) ds \rVert   \\
\nonumber := & J_1+J_2+J_3
\end{align}
Consider $J_1$. We see that,
\begin{align}
 \nonumber J_1 \leq &  \underset{0\leq s \leq t_0-\delta} \sup \lVert A^{\frac{1}{2}}[T_{\alpha}(t-s)-T_{\alpha}(t_0-s)]\rVert \int_{0}^{t_0-\delta}  \omega(s)\lVert u_s \rVert_{Y_{\frac{1}{2}}} ds \\ 
\nonumber \leq & \underset{0\leq s \leq t_0-\delta} \sup \lVert A^{\frac{1}{2}}[T_{\alpha}(t-s)-T_{\alpha}(t_0-s)] \rVert \int_{0}^{t_0-\delta}  \omega(s) (R+\lVert \phi\rVert_{Y_\frac{1}{2}}) ds
 \end{align}
Since for any $b>0$,  $\omega \in L^p[0,b]$, therefore $\omega \in L^1[0,b]$. Also since by Lemma (\ref{continuity.es}) $t \mapsto A^{\frac{1}{2}}T(t)$ is continuous in the uniform operator topology on $[\delta, T_1]$ for every $\delta>0$,  so there exists $\tau \in [0, t_0-\delta]$ such that, $$\underset{0\leq s \leq t_0-\delta} \sup \lVert A^{\frac{1}{2}}[T_{\alpha}(t-s)-T_{\alpha}(t_0-s)]\rVert = \lVert A^{\frac{1}{2}}[T_{\alpha}(t-\tau)-T_{\alpha}(t_0-\tau)] \rVert\rightarrow 0 \text{ as } t \rightarrow t_0$$ and hence $J_1 \rightarrow 0$ as $t \rightarrow t_0$.\\
Similarly considering $J_2$ and using Lemma (\ref{le:T})(iv) we have,
\begin{align}
\nonumber J_2 &\leq \int_{t_0-\delta}^{t_0 }\lVert A^{\frac{1}{2}}[T_{\alpha}(t-s)-T_{\alpha}(t_0-s)] \rVert \omega(s)\lVert u_s\rVert_{Y_{\frac{1}{2}}} ds \\
\nonumber &\leq B_3 \int_{t_0-\delta}^{t_0} [(t-s)^{\frac{\alpha}{2}-1}+(t_0-s)^{\frac{\alpha}{2}-1}] \omega(s) (R+\lVert \phi \rVert_{Y_\frac{1}{2}}) ds \\
&\label{J2}\leq 2B_3 (R+ \lVert \phi \rVert_{Y_{\frac{1}{2}}}) \int_{t_0-\delta}^{t_0} (t_0-s)^{\frac{\alpha}{2}-1} \omega(s) ds.
\end{align}
Since for any $b>0$,  $\omega \in L^p[0,b]$ with $p>\frac{2}{\alpha}$, therefore R.H.S of (\ref{J2}) $\rightarrow 0 $ as $ \delta \rightarrow 0$.\\
Again by using Lemma (\ref{le:T})(iv) in $J_3$ we have,
\begin{align}
\nonumber J_3 &\leq \int_{t_0}^{t} \lVert A^{\frac{1}{2}} T_{\alpha}(t-s) \omega(s)\lVert u_s\rVert_{Y_{\frac{1}{2}}} ds \\
\nonumber &\leq B_3\int_{t_0}^{t} (t-s)^{\frac{\alpha}{2}-1}  \omega(s) (R+\lVert \phi \rVert_{Y_\frac{1}{2}}) ds\\
&\label{J3}\leq B_3( R+ \lVert \phi \rVert_{Y_{\frac{1}{2}}} )\int_{t_0}^{t} (t-s)^{\frac{\alpha}{2}-1} \omega(s)ds  \rightarrow 0 \text { as } t \rightarrow t_0.
\end{align}
Again, since for any $b>0$,  $\omega \in L^p[0,b]$ with $p>\frac{2}{\alpha}$, therefore R.H.S of (\ref{J3}) $\rightarrow 0 $ as $t \rightarrow t_0$.\\
Therefore $\lVert A^\frac{1}{2} (w(t)-w(t_0)) \rVert \rightarrow 0 $ as $t \rightarrow t_0+$. Analogously it can be proved that $\lVert A^\frac{1}{2} (w(t)-w(t_0)) \rVert \rightarrow 0 $ as $t \rightarrow t_0-$ by considering $t<t_0.$ Hence $t \mapsto w(t)$ is continuous on $(0, T_1]$ with respect to the  topology induced by $D(A^{\frac{1}{2}})$-norm.\\
Since $u(0)=\phi(0) \in D(A^{\frac{1}{2}})$, therefore by Proposition (\ref{commutivity}), Lemma (\ref{le:S})(iii) we can say that $\lVert A^{\frac{1}{2}} (S_{\alpha}(t)\phi(0)-S_{\alpha}(t_0)\phi(0))\rVert= \lVert  S_{\alpha}(t)A^{\frac{1}{2}}\phi(0)-S_{\alpha}(t_0)A^{\frac{1}{2}}\phi(0)\rVert \rightarrow 0$ as $t\rightarrow t_0$.\\
Thus we proved that $t\mapsto Ku(t)$ is continuous on $[-r, T_1]$ with respect the topology induced by $D(A^{\frac{1}{2}})$-norm and hence $K(Z_{\frac{1}{2}}) \subset Z_{\frac{1}{2}}$.\\
Now let $u$, $v \in Z_{\frac{1}{2}}$, $t \in [0, T_1]$. Then using Lemmas (\ref{le:T})(iv), (\ref{le:E1})(ii) we get,
\begin{align}
\nonumber   \lVert A^{\frac{1}{2}} \{ Ku(t)-Kv(t) \} \rVert
\nonumber  & \leq \lVert A^{\frac{1}{2}}\int_{0}^{t} A^{\frac{1}{4}} T_{\alpha}(t-s)  A^{-\frac{1}{4}}(Fu(s)-Fv(s))ds \rVert \\
\nonumber &  +\lVert A^{\frac{1}{2}} \int_{0}^{t}  T_{\alpha}(t-s)  \{P f(s,u_s) -P f(s,v_s)\} ds \rVert\\
\nonumber & \leq B_3\int_{0}^{t} (t-s)^{\frac{\alpha}{4}-1} \lVert  A^{-\frac{1}{4}}(Fu(s)-Fv(s)) \rVert ds \\
\nonumber & + B_3 \int_{0}^{t} (t-s)^{\frac{\alpha}{2}-1} \lVert Pf(s,u_s) -P f(s,v_s) \rVert ds \\
\nonumber & \leq c_1 B_3 \int_{0}^{t} (t-s)^{\frac{\alpha}{4}-1}  \{ \lVert A^{\frac{1}{2}}(u(s)-v(s))\rVert \} \{\lVert A^{\frac{1}{2}}u(s)\rVert +\lVert A^{\frac{1}{2}} v(s) \rVert \}ds \\
\nonumber &+ B_3 L_f \int_{0}^{t} (t-s)^{\frac{\alpha}{2}-1} \lVert u_s-v_s\rVert_{Y_{\frac{1}{2}}} ds \\
\nonumber &  \leq c_1 B_3 \int_{0}^{t} (t-s)^{\frac{\alpha}{4}-1}  \lVert u-v\rVert_{Z_{\frac{1}{2}}} 2 (R+\lVert \phi \rVert_{Y_\frac{1}{2}})ds \\
\nonumber & +B_3 L_f \int_{0}^{t} (t-s)^{\frac{\alpha}{2}-1} \underset{0\leq r \leq s}\sup \lVert A^{\frac{1}{2}} ( u(r)-v(r) )\rVert ds  \\
\nonumber & \leq 2(R+\lVert \phi\rVert_{Y_{\frac{1}{2}}})c_1 B_3 \lVert u-v \rVert_{Z_{\frac{1}{2}}} \int_{0}^{t} (t-s)^{\frac{\alpha}{4}-1} ds\\
&\nonumber  + B_3 L_f \lVert u-v \rVert_{Z_{\frac{1}{2}}} \int_{0}^{t} (t-s)^{\frac{\alpha}{2}-1} ds \\
\label{contraction} & = \big \{ 2(R+\lVert \phi \rVert_{Y_{\frac{1}{2}}})c_1B_3  \int_{0}^{t} (t-s)^{\frac{\alpha}{4}-1} ds + B_3 L_f  \int_{0}^{t} (t-s)^{\frac{\alpha}{2}-1} ds \big \} ||u-v||_{Z_{\frac{1}{2}}} 
\end{align}
Since both the integrals in  R.H.S of (\ref{contraction}) tend to zero as $t\rightarrow 0$, we can choose a small positive $T(\leq T_1)$ such that following holds,
\begin{equation}
\lVert A^{\frac{1}{2}} \{ Ku(t)-Kv(t) \}\rVert \leq M \lVert u-v\rVert_{Z_{\frac{1}{2}}} \text{for all} \ t \in[0, T] \
\text{and some} \ 0<M<1,
\end{equation}
This implies that
 $\lVert Ku-Kv \rVert_{Z_{\frac{1}{2}}} \leq M \lVert u-v \rVert_{Z_{\frac{1}{2}}} \text{ for some } 0<M<1.$\\
Therefore, $K:Z_{\frac{1}{2}}\rightarrow Z_{\frac{1}{2}}$ is a contraction map. Consequently, by contraction mapping principle (\ref{contraction.principle}), $K$ has a unique fixed point $u \in Z_{\frac{1}{2}}$ which satisfies the integral equation (\ref{int.op}). This proves the existence of uniqueness local mild solution of (\ref{abf2}).
\end{proof} 

\section{Continuation of mild solution}
\begin{theorem}\label{main.res.2}
Assume that all the  conditions of the Theorem (\ref{main.result1}) hold for $U=Y_{\frac{1}{2}}$. Then for every $\phi \in Y_{\frac{1}{2}}$,  problem (\ref{abf2}) has a unique mild solution on a maximal interval of existence $[-r, t_{max})$. Moreover if $t_{max} < \infty$ then $\overline{\text{lim}}_{t \to t_{max}} \lVert u(t) \rVert_{D(A^{\frac{1}{2}})}=\infty$.
\end{theorem}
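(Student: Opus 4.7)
The proof follows the standard pattern for evolution equations: construct a maximal mild solution by patching together local ones (using uniqueness to ensure consistency), and then establish the blow-up dichotomy by contradiction, assuming that the $D(A^{\frac{1}{2}})$-norm stays bounded and extending the solution past $t_{\max}$.

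To begin, I would verify uniqueness on any common interval: if $u^{(1)}$ and $u^{(2)}$ are two mild solutions on $[-r,T]$ with the same initial datum $\phi$, then the contraction estimate (\ref{contraction}) derived in the proof of Theorem \ref{main.result1}, applied on successive short subintervals, forces $u^{(1)}\equiv u^{(2)}$. Hence the family of all local mild solutions is directed under restriction, and its union defines a unique maximal mild solution $u\colon[-r,t_{\max})\to D(A^{\frac{1}{2}})$ with
\[
t_{\max}:=\sup\bigl\{T>0:\text{a mild solution exists on }[-r,T]\bigr\}\in(0,\infty].
\]

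Assume now $t_{\max}<\infty$ and, for contradiction, that $\overline{\text{lim}}_{t\to t_{\max}}\lVert u(t)\rVert_{D(A^{\frac{1}{2}})}<\infty$. Then $N:=\sup_{t\in[-r,t_{\max})}\lVert u(t)\rVert_{D(A^{\frac{1}{2}})}$ is finite and $\lVert u_t\rVert_{Y_{\frac{1}{2}}}\leq\max(N,\lVert\phi\rVert_{Y_{\frac{1}{2}}})$ for every $t\in[0,t_{\max})$. Using the integral representation (\ref{int.eq}), I would repeat the three-piece continuity computation of Theorem \ref{main.result1} (splitting the two Volterra integrals over $[0,t_0-\delta]$, $[t_0-\delta,t_0]$, $[t_0,t]$ and invoking Proposition \ref{commutivity}, Lemma \ref{le:S}(iii), Lemma \ref{le:T}(iv), Lemma \ref{le:E1}(i), and Lemma \ref{continuity.es}) to show that $t\mapsto u(t)$ is uniformly continuous on $[0,t_{\max})$ with respect to the $D(A^{\frac{1}{2}})$-norm. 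This yields a well-defined limit $u^{*}:=\lim_{t\to t_{\max}^{-}}u(t)\in D(A^{\frac{1}{2}})$, so the extension $\bar u$ with $\bar u(t_{\max})=u^{*}$ lies in $C([-r,t_{\max}];D(A^{\frac{1}{2}}))$, and in particular $\psi:=\bar u_{t_{\max}}\in Y_{\frac{1}{2}}$.

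To conclude, I would extend $\bar u$ past $t_{\max}$ by a second fixed-point argument: for small $\delta>0$, seek $v\in C([-r,t_{\max}+\delta];D(A^{\frac{1}{2}}))$ with $v\big|_{[-r,t_{\max}]}=\bar u$ and satisfying (\ref{int.eq}) for $t\in(0,t_{\max}+\delta]$. Splitting the two integrals in (\ref{int.eq}) at $t_{\max}$ isolates a continuous inhomogeneity depending only on the known $\bar u$, so the contraction argument of Theorem \ref{main.result1}, applied to the unknown part over $[t_{\max},t_{\max}+\delta]$, produces a fixed point for $\delta$ sufficiently small, contradicting the maximality of $t_{\max}$. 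The principal obstacle will be establishing the one-sided limit $u^{*}$: the Volterra-type estimates must be uniform in $t$ up to $t_{\max}$, and this hinges on the local integrability of the kernels $(t-s)^{\frac{\alpha}{4}-1}$ and $(t-s)^{\frac{\alpha}{2}-1}\omega(s)$ over the whole interval $[0,t_{\max}]$, which relies on the hypothesis $\omega\in L^{p}_{\mathrm{loc}}[0,\infty)$ with $p>\frac{2}{\alpha}$ from Theorem \ref{main.result1} (and the fact that $U=Y_{\frac{1}{2}}$, so no boundary in $U$ can be approached).
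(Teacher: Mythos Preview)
Your argument is correct and parallels the paper in the blow-up step: both assume the $D(A^{1/2})$-norm stays bounded, deduce uniform continuity of $u$ on $(0,t_{\max})$ via the same three-piece splitting of the Volterra integrals (Lemmas \ref{le:T}(iv), \ref{le:E1}(i), \ref{continuity.es}), and conclude that $\lim_{t\to t_{\max}}u(t)$ exists. The genuine difference is the extension mechanism. The paper translates time: it sets $v(t)=u(t+T)$, asserts that $v$ is a mild solution of a fresh Cauchy problem (\ref{abf.extension}) with initial datum $u_T\in Y_{1/2}$, and invokes Theorem \ref{main.result1} again. You instead keep the original integral equation (\ref{int.eq}) intact, split the convolutions at $t_{\max}$ so that the portion over $[0,t_{\max}]$ becomes a known $D(A^{1/2})$-continuous inhomogeneity, and run the contraction only on the unknown tail over $[t_{\max},t_{\max}+\delta]$. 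For Caputo-fractional problems your route is the sounder one: the operators $S_\alpha(\cdot)$, $T_\alpha(\cdot)$ lack the semigroup property, so the translated function $u(\cdot+T)$ does \emph{not} satisfy a mild-solution formula of the same shape with initial value $u(T)$---the memory back to $t=0$ cannot simply be discarded, and the paper's translate-and-restart step glosses over this. Your splitting sidesteps the issue entirely; the only additional work is checking that the frozen piece $S_\alpha(t)\phi(0)+\int_0^{t_{\max}}T_\alpha(t-s)[\,\cdots\,]\,ds$ is continuous in $D(A^{1/2})$ near $t_{\max}$, which follows from the uniform-continuity estimates you already carry out. Your explicit uniqueness step (iterating the contraction bound on short subintervals) is also a welcome addition that the paper leaves implicit.
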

\begin{proof}
From the previous result, we know that the mild solution of (\ref{abf2}) exists in the interval $[-r, T]$. Now we prove that this solution can be extended to the interval $[-r, T+\delta]$ for some $\delta>0$.\\
Let $u$ be the mild solution of (\ref{abf2}) on $[-r, T]$. Define $v(t)=u(t+T)$ where $v(t)$ is a mild solution of
\begin{equation}\label{abf.extension}
\left.\begin{aligned}
~^cD^{\alpha}_t v+A v(t) &=Fv(t)+Pf(t, v_t), \quad t>0,  \\ v_0=u_T,
\end{aligned}\right\}
\end{equation}
Since $u \in C\big([-r,T];D(A^{\frac{1}{2}})\big)$, therefore $v_0=u_T \in Y_{\frac{1}{2}}$. Hence the existence of the mild  solution of (\ref{abf.extension}) on some interval $[-r, \delta]$, where $\delta>0$,  is assured by the Theorem (\ref{main.result1}). 
Consequently, let $[-r,t_{max})$ be the maximal interval of existence of mild solution of (\ref{abf2}).\\
If $t_{max}=\infty$ then the mild solution is global.
If $t_{max}<\infty$ we prove that  $\overline{\text{lim}}_{t \to t_{max}} \lVert u(t) \rVert_{D(A^{\frac{1}{2}})}=\infty$.\\
Let us  assume that $\overline{\text{lim}}_{t \to t_{max}} \lVert u(t) \rVert_{D(A^{\frac{1}{2}})} < \infty$. Consequently, $\overline{\text{lim}}_{t \to t_{max}} \lVert u_t \rVert_{Y_\frac{1}{2}} <\infty$. Then,  there exists $N>0$ such that such that $\lVert u_t \rVert_{Y_\frac{1}{2}} \leq N$ for all $t \in [0, t_{max}).$ This implies $\lVert u(t)\rVert _{D(A^\frac{1}{2})} \leq N$ and $\lVert Pf(t,u_t)\rVert \leq N \omega(t)$ for all $t \in [0,t_{max}).$\\
Let $0 < t<\tau< t_{max}$ and $\delta>0$ be sufficiently small. Then we have
\begin{align*}
\lVert u(t)-u(\tau) \rVert_{D(A^\frac{1}{2})} &= \lVert S_\alpha(t)\phi(0) -S_\alpha(\tau) \phi(0)\rVert_{D(A^\frac{1}{2})}+\lVert \int_{t}^{\tau} T_\alpha(\tau-s) Pf(s,u_s)ds \rVert_{D(A^\frac{1}{2})}\\
&\quad+\lVert \int_{0}^{t-\delta} [T_\alpha(t-s)-T_\alpha(\tau-s)] Pf(s,u_s)ds \rVert_{D(A^\frac{1}{2})}\\
&\quad+\lVert \int_{t-\delta}^{t} [T_\alpha(t-s)-T_\alpha(\tau-s)]Pf(s,u_s)ds \rVert_{D(A^\frac{1}{2})}\\
&\quad+\lVert \int_{0}^{t-\delta} [T_\alpha(t-s)-T_\alpha(\tau-s)] Fu(s)ds \rVert_{D(A^\frac{1}{2})}\\
&\quad+\lVert \int_{t-\delta}^{t} [T_\alpha(t-s)-T_\alpha(\tau-s)] Fu(s)ds \rVert_{D(A^\frac{1}{2})}+\lVert \int_{t}^{\tau} T_\alpha(\tau-s) Fu(s)ds \rVert_{D(A^\frac{1}{2})}\displaybreak[1] \\
&\leq \lVert S_\alpha(t)\phi(0) -S_\alpha(\tau) \phi(0)\rVert_{D(A^\frac{1}{2})}+N B_3 \int_{t}^{\tau} (\tau-s)^{\frac{\alpha}{2}-1} \omega(s)ds \\
&\quad +N \underset{0\leq s \leq t-\delta} \sup \lVert A^{\frac{1}{2}}[T_{\alpha}(t-s)-T_{\alpha}(\tau-s)]\rVert \int_{0}^{t-\delta} \omega(s)ds\\
&\quad +2N\int_{t-\delta}^{t} (t-s)^{\frac{\alpha}{2}-1} \omega(s) ds+N^2 t_{max} \underset{0\leq s \leq t-\delta} \sup \lVert A^{\frac{3}{4}}[T_{\alpha}(t-s)-T_{\alpha}(\tau-s)]\rVert \\
&\quad + 2N \int_{t-\delta}^{t} (t-s)^{\frac{\alpha}{4}-1} ds +N^2 \int_{t}^{\tau} (\tau-s)^{\frac{\alpha}{4}-1}ds
\end{align*}
Since $\omega \in L_p[0, t_{max})$ for $p>\frac{2}{\alpha}$, therefore by applying H\"older's inequality in 2nd and 4th integrals of the above inequality and using the fact (\ref{continuity.es}), it is easy to check that R.H.S of the above inequality can be made arbitrarily small by  choosing $|t-\tau|$ sufficiently small. Hence $t \mapsto u(t)$ is uniformly continuous on $(0, t_{max})$ with respect to the topology induced by $D(A^\frac{1}{2})$-norm. This implies that $\lim\limits_{t \to t_{max} }u(t)=u(t_{\max})$ exists, which contradicts the maximality of the interval of existence. So our assumption is wrong. Hence the theorem is proved.
\end{proof}

\section{Regularity result}
In this section, we prove the regularity of the mild solution of the problem (\ref{abf2}). If we could  prove that the function $t \mapsto Fu(t)+Pf(t, u_t)$ is H\"older continuous on the interval $[0,T]$ in a Banach space $L^2_{\sigma}(\Omega)$, then the mild solution of (\ref{abf2}) is classical one \cite{li}. But we found that for the mild solution $u :[-r,T] \rightarrow D(A^\frac{1}{2})$ of (\ref{abf2}), the H\"older continuity of $t \mapsto Fu(t)$ can not be proved in  $L^2_{\sigma}(\Omega)$. To overcome this difficulty, we choose  initial datum $\phi$ such that it belongs to the space $U$ which is an open subset of $ Y_{\frac{3}{4}} :=C([-r,0];D(A^{\frac{3}{4}}))$. Further, we consider the following assumptions;
\begin{enumerate}[(I)]
\item $\lVert Pf(t, \varphi) \rVert \leq \omega(t) \lVert \varphi \rVert _{Y_{\frac{3}{4}}}$ for all $t\geq0$, $\varphi \in U$ and for some $\omega \in L_{loc}^p[0,\infty)$, where $p>\frac{4}{\alpha}$,
\item$\lVert Pf(t, \varphi)-Pf(t, \psi) \rVert \leq  L_f \lVert \varphi-\psi \rVert_{Y_{\frac{3}{4}}}$ for all $\varphi, \psi \in U$ and for some $L_f >0$,
\end{enumerate}
Then analogous to the proof of the Theorem (\ref{main.result1}), it can be proved that under the above assumptions (I), (II) and Lemma (\ref{le:E2}), there exists unique local mild solutions $u \in C([-r,T];D(A^{\frac{3}{4}}))$ of (\ref{abf2}) such that $\lVert u_t -\phi \rVert_{Y_{\frac{3}{4}}} \leq R$  for all $t \in [0,T]$, for some $R>0$.\\
Now we prove the regularity of this mild solution in the following theorem.
\begin{theorem}\label{Regularity}
Let $u \in C([-r,T];D(A^{\frac{3}{4}}))$ be the local mild solution of the evolution system (\ref{abf2}) such that $\lVert u_t -\phi \rVert_{Y_{\frac{3}{4}}} \leq R$  for all $t \in [0,T]$ and for some $R>0$. Also we assume the following hypotheses;
\begin{enumerate}[(H1)]
\item $\phi \in C^\theta ([-r,0];D(A^{\frac{3}{4}}))$ such that $\phi(0) \in D(A)$, for some $\theta \in (0,1)$.\item $Pf$ be such that $\lVert Pf(t, \varphi)- Pf(s, \psi) \rVert \leq L (|t-s|^\theta+\lVert \varphi- \psi \rVert_{ Y_{\frac{3}{4}}})$ for all $t,s \in [0,T]$ and $\varphi, \psi \in Y_{\frac{3}{4}}$, for some $\theta \in (0,1)$. 
\end{enumerate}
Then the mild solution is a classical solution.
\end{theorem}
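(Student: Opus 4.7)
The overall strategy is to reduce the classical-solution problem to proving that the forcing term
\[
g(t) := Fu(t) + Pf(t,u_t)
\]
is H\"older continuous as a map $[0,T] \to L^2_\sigma(\Omega)$. Once this is established, the remark at the beginning of Section $5$ (citing \cite{li}) gives that the mild solution $u$ satisfies (i)--(iii) of the classical solution definition. So the proof will consist of (a) establishing H\"older continuity of $t \mapsto u(t)$ in $D(A^{3/4})$, (b) bootstrapping this to H\"older continuity of $Fu$ and $Pf(\cdot,u_\cdot)$ in $L^2_\sigma(\Omega)$, and (c) invoking the regularity result.

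For step (a), I plan to write
\[
u(t) - u(\tau) = \bigl[S_\alpha(t) - S_\alpha(\tau)\bigr]\phi(0) + \int_\tau^t T_\alpha(t-s)g(s)\,ds + \int_0^\tau \bigl[T_\alpha(t-s) - T_\alpha(\tau-s)\bigr]g(s)\,ds,
\]
for $0 < \tau < t \leq T$, and control each piece in $D(A^{3/4})$-norm. The first piece is handled using (H1) (so $\phi(0) \in D(A)$) together with Lemma \ref{le:T}(v) and Proposition \ref{commutivity}:
\[
A^{3/4}\bigl[S_\alpha(t) - S_\alpha(\tau)\bigr]\phi(0) = \int_\tau^t A^{3/4} T_\alpha(\sigma) A\phi(0)\,d\sigma,
\]
and Lemma \ref{le:T}(iv) with $\beta=3/4$ yields a bound of order $(t^{\alpha/4}-\tau^{\alpha/4}) \lesssim (t-\tau)^{\alpha/4}$. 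For the short integral from $\tau$ to $t$, Lemma \ref{le:E2}(i) gives $\|Fu(s)\| \leq c_2(R+\|\phi\|_{Y_{3/4}})^2$, and assumption (I) bounds $\|Pf(s,u_s)\| \leq \omega(s)(R+\|\phi\|_{Y_{3/4}})$, so with Lemma \ref{le:T}(iv) and H\"older's inequality (using $p > 4/\alpha$) this piece is $O((t-\tau)^\gamma)$ for some $\gamma \in (0,1)$. The third, "bulk" piece will use the identity $T_\alpha(t-s)-T_\alpha(\tau-s) = \int_{\tau-s}^{t-s} T'_\alpha(\sigma)\,d\sigma$, combined with Lemma \ref{le:T}(iii) and an interpolation-type estimate $\|A^{3/4}T'_\alpha(\sigma)\| \lesssim \sigma^{\alpha/4-2}$ (obtained by a contour-integral computation analogous to (\ref{def:T})), which after integration in $s$ gives the desired H\"older bound.

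For step (b), once $u \in C^\mu([0,T];D(A^{3/4}))$ for some $\mu > 0$, Lemma \ref{le:E2}(ii) gives
\[
\|Fu(t) - Fu(\tau)\| \leq c_2\bigl(\|A^{1/2}(u(t)-u(\tau))\|\|A^{3/4}u(t)\| + \|A^{3/4}(u(t)-u(\tau))\|\|A^{1/2}u(\tau)\|\bigr) \lesssim |t-\tau|^\mu,
\]
using the uniform bound $\|A^{3/4}u\|\le R+\|\phi\|_{Y_{3/4}}$ and $D(A^{3/4})\hookrightarrow D(A^{1/2})$. For $Pf$, H\"older continuity of $t \mapsto u_t$ in $Y_{3/4}$ follows from step (a) on $[0,T]$ glued with (H1) on the delay interval (splitting cases according as $t+\theta$ and $\tau+\theta$ lie in $[-r,0]$ or $[0,T]$), after which (H2) immediately gives $\|Pf(t,u_t)-Pf(\tau,u_\tau)\| \lesssim |t-\tau|^{\min(\theta,\mu)}$.

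The main obstacle I foresee is step (a) for the bulk term: balancing the singularity of $T'_\alpha$ near $\sigma = 0$ against the $(t-\tau)$ factor coming from the interval of integration. The natural estimate $\|A^{3/4}[T_\alpha(t-s)-T_\alpha(\tau-s)]\| \leq C(t-\tau)^\gamma (\tau-s)^{\alpha/4-1-\gamma}$ for some $\gamma \in (0,\alpha/4)$ must be integrated in $s$ on $[0,\tau]$ and must remain integrable; choosing $\gamma$ so that $\alpha/4-1-\gamma > -1$ requires $\gamma < \alpha/4$, which is fine, but the corresponding H\"older exponent for $u$ will be this $\gamma$, not the full $\alpha/4$. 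This loss is acceptable since any strictly positive H\"older exponent suffices to trigger the classical regularity result, but one must keep track of the exponents carefully so that the ultimate H\"older exponent of $g$ is a valid admissible exponent for \cite{li}.
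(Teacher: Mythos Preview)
Your overall architecture matches the paper's exactly: show $t\mapsto u(t)$ is H\"older in $D(A^{3/4})$ by splitting the mild-solution formula into the $S_\alpha$-piece, the short integral, and the bulk integral; then use Lemma~\ref{le:E2}(ii) for $Fu$ and the case-splitting in $\theta$ plus (H1)--(H2) for $t\mapsto Pf(t,u_t)$; finally invoke \cite{li}. The one genuine methodological difference is in the bulk term. The paper does \emph{not} pass through $T'_\alpha$ and interpolation; instead it uses Lemma~\ref{continuity.es} directly, which furnishes the pointwise bound
\[
\lVert A^{3/4}[T_\alpha(t+h-s)-T_\alpha(t-s)]\rVert \le B_4\bigl[(t-s)^{\alpha/4-1}-(t+h-s)^{\alpha/4-1}\bigr],
\]
and this telescopes upon integration in $s$ to give the clean exponent $\alpha/4$ with no loss (this is packaged as Lemmas~\ref{hold1}--\ref{hold2}). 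Your derivative/interpolation route works too, but it forces $\gamma<\alpha/4$ and costs you the bookkeeping you flag as an ``obstacle''; that obstacle simply does not arise if you use Lemma~\ref{continuity.es}. A second minor simplification in the paper: rather than invoking assumption~(I) and H\"older's inequality on $\omega$ for the $Pf$-integral, it observes from (H2) that $s\mapsto Pf(s,u_s)$ is continuous (hence bounded by some $N$) on $[0,T]$, which lets both integral pieces be treated symmetrically with $Fu$.
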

To prove the above theorem we first need to prove the following results;
\begin{Lemma}\label{hold1}
Let $0<\beta<1$ and define $v(t)= \int_{0}^{t} T_{\alpha}(t-s) Pf(s, u_s) ds$, $t \geq 0$, where $u \in C([-r,T];D(A^{\frac{3}{4}}))$. Then $v(t) \in D(A^\beta)$ for all $t \in [0,T]$. Moreover $A^\beta v \in C^{\alpha(1-\beta)}([0,T];L^2_{\sigma}(\Omega))$.
\begin{proof}
Since $u \in C([-r,T];D(A^{\frac{3}{4}}))$, the map  $t \mapsto u_t$ is continuous  on $[0, T]$ with respect to $D(A^{\frac{3}{4}}))$ norm. By assumption (H2), $t \mapsto Pf(t,u_t)$ is continuous on $[0,T]$ with respect to $L^2_{\sigma}(\Omega)$ norm. So there exists $N>0$ such that $\lVert Pf(t,u_t)  \rVert \leq N,$ for all $t \in [0,T]$.

Now, by using Lemma (\ref{le:T})(iv) we see that $ \lVert A^\beta T_{\alpha}(t-s) Pf(t,u_t) \rVert \leq N B_3 (t-s)^{\alpha(1-\beta)-1}$ which is integrable on $(0,t)$ and since $A^\beta$ is closed operator, by Lemma (\ref{int.clsd.op}) $$\lVert A^\beta v(t) \rVert \leq N B_3 \int_{0}^{t} (t-s)^{\alpha(1-\beta)-1} ds = \frac{NB_3}{\alpha (1-\beta)} t^{\alpha(1-\beta)} \leq \frac{N B_3}{\alpha(1-\beta)} T^{\alpha(1-\beta)} <\infty \ \text{ for all }  t \in [0,T].$$
Let $t \in [0,T]$ and $h>0$ such that $t+h \in [0,T]$. Without loss of generality assume $0<h<1$.
\begin{align*}
A^\beta v(t+h)- A^\beta v(t) &= A^\beta \int_{0}^{t+h} T_{\alpha}(t+h-s) Pf(s,u_s) ds-A^\beta \int_{0}^{t} T_{\alpha}(t-s) Pf(s,u_s) ds\\
&=A^\beta \int_{0}^{t}[T_{\alpha}(t+h-s)-T_{\alpha}(t-s)] Pf(s, u_s)ds+\int_{t}^{t+h} T_{\alpha}(t+h-s)Pf(s,u_s)ds \\
&:= v_1+v_2
\end{align*}
Now, by Lemma (\ref{continuity.es})
\begin{align*}
\lVert v_1 \rVert &\leq NB_4\int_{0}^{t} [(t-s)^{\alpha(1-\beta)-1}-(t+h-s)^{\alpha(1-\beta)-1}] ds \\
&= \frac{NB_4}{\alpha(1-\beta)} [t^{\alpha(1-\beta)}-(t+h)^{\alpha(1-\beta)}+h^{\alpha(1-\beta)}]\\
&\leq \frac{NB_4}{\alpha(1-\beta)} h^{\alpha(1-\beta)}
 \end{align*}
 Also, using Lemma (\ref{le:T})(iv) we have
 \begin{align*}
 \lVert v_2\rVert &\leq N B_3 \int_{t}^{t+h} (t+h-s)^{\alpha(1-\beta)-1} ds\\
 &= \frac{NB_3}{\alpha(1-\beta)} h^{\alpha(1-\beta)}
 \end{align*}
 Hence $A^\beta v \in C^{\alpha(1-\beta)}([0,T];L^2_{\sigma}(\Omega)) $.
\end{proof}
\end{Lemma}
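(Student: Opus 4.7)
The plan is to verify two claims in sequence: that $v(t)\in D(A^\beta)$ for every $t\in[0,T]$, and that $A^\beta v$ is H\"older continuous with exponent $\alpha(1-\beta)$. The key preliminary input is that $u\in C([-r,T];D(A^{3/4}))$ forces the history map $t\mapsto u_t$ to be continuous into $Y_{3/4}$; combining this with hypothesis (H2) (which makes $Pf$ continuous in both arguments), the map $t\mapsto Pf(t,u_t)$ is continuous on the compact interval $[0,T]$ and is therefore bounded by some $N>0$.

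For the membership claim, Lemma \ref{le:T}(iv) furnishes the integrable dominating bound $\|A^\beta T_\alpha(t-s)Pf(s,u_s)\|\leq N B_3 (t-s)^{\alpha(1-\beta)-1}$ on $(0,t)$, so closedness of $A^\beta$ together with Lemma \ref{int.clsd.op} places $v(t)$ in $D(A^\beta)$ and yields the representation $A^\beta v(t)=\int_0^t A^\beta T_\alpha(t-s)Pf(s,u_s)\,ds$, with the uniform bound $\|A^\beta v(t)\|\leq \frac{N B_3}{\alpha(1-\beta)} T^{\alpha(1-\beta)}$.

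For H\"older continuity, fix $t\in[0,T]$ and small $h>0$ with $t+h\in[0,T]$, and decompose
\[
A^\beta v(t+h)-A^\beta v(t)= \underbrace{\int_0^t A^\beta\bigl[T_\alpha(t+h-s)-T_\alpha(t-s)\bigr]Pf(s,u_s)\,ds}_{v_1} + \underbrace{\int_t^{t+h} A^\beta T_\alpha(t+h-s)Pf(s,u_s)\,ds}_{v_2}.
\]
The tail $v_2$ is handled directly by Lemma \ref{le:T}(iv), producing $\|v_2\|\leq \frac{N B_3}{\alpha(1-\beta)} h^{\alpha(1-\beta)}$. The bulk $v_1$ is controlled by the operator-difference estimate of Lemma \ref{continuity.es}, which (in its corrected pointwise form) gives $\|A^\beta T_\alpha(t+h-s)-A^\beta T_\alpha(t-s)\|\leq B_4\bigl[(t-s)^{\alpha(1-\beta)-1}-(t+h-s)^{\alpha(1-\beta)-1}\bigr]$; integrating this in $s$ over $[0,t]$ and applying the elementary subadditivity $(t+h)^{\alpha(1-\beta)}\leq t^{\alpha(1-\beta)}+h^{\alpha(1-\beta)}$ (valid because $\alpha(1-\beta)\in(0,1)$) collapses the telescoping expression to $\|v_1\|\leq \frac{N B_4}{\alpha(1-\beta)} h^{\alpha(1-\beta)}$. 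The case $h<0$ is symmetric, so $A^\beta v\in C^{\alpha(1-\beta)}([0,T];L^2_\sigma(\Omega))$.

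The main technical obstacle is the singularity of $A^\beta T_\alpha(\cdot)$ at the origin, which prevents a naive triangle-inequality estimate from picking up any power of $h$ in $v_1$. The resolution is to treat the difference of the two singular kernels as a single integrable object via Lemma \ref{continuity.es}, and to use subadditivity of the power $\alpha(1-\beta)\in(0,1)$ to absorb the resulting telescoped terms into the clean H\"older bound.
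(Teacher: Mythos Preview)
Your proof is correct and follows essentially the same approach as the paper: the same boundedness of $Pf(t,u_t)$ via continuity and (H2), the same membership argument through Lemma \ref{le:T}(iv) and closedness of $A^\beta$, and the identical $v_1+v_2$ decomposition handled by Lemma \ref{continuity.es} and Lemma \ref{le:T}(iv) respectively. Your parenthetical remark about the ``corrected pointwise form'' of Lemma \ref{continuity.es} (with exponent $\alpha(1-\beta)-1$ rather than $\alpha(1-\beta)$) is apt, since the paper's own proof of this lemma implicitly uses that form as well; your added remarks on subadditivity and the symmetric case $h<0$ are helpful elaborations but not departures.
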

\begin{Lemma}\label{hold2}
Let $0<\beta<1$ and define $w(t)= \int_{0}^{t} T_{\alpha}(t-s) Fu(s) ds$, $t \geq 0$, where $u \in C([-r,T];D(A^{\frac{3}{4}}))$. Then $w(t) \in D(A^\beta)$ for all $t \in [0,T]$. Moreover $A^\beta w \in C^{\alpha(1-\beta)}([0,T];L^2_{\sigma}(\Omega))$.
\end{Lemma}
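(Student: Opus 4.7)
The plan is to mirror the proof of Lemma \ref{hold1} almost verbatim, with the single essential modification being how one controls the nonlinear term $Fu(s)$. The only reason the previous proof worked was that $\lVert Pf(s,u_s)\rVert$ was uniformly bounded by some constant $N$ on $[0,T]$; once I establish the analogous bound for $\lVert Fu(s)\rVert$, everything else goes through unchanged.

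First I would show that $s\mapsto Fu(s)$ is bounded on $[0,T]$ in $L^2_\sigma(\Omega)$. Since $u \in C([-r,T];D(A^{3/4}))$ and $D(A^{3/4}) \hookrightarrow D(A^{1/2})$, both $\lVert A^{1/2}u(s)\rVert$ and $\lVert A^{3/4}u(s)\rVert$ are continuous and hence bounded on $[0,T]$. Applying Lemma \ref{le:E2}(i) yields
\begin{equation*}
\lVert Fu(s)\rVert \leq c_2 \lVert A^{1/2}u(s)\rVert\, \lVert A^{3/4}u(s)\rVert \leq N'
\end{equation*}
for some constant $N' > 0$ and all $s \in [0,T]$. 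This is the exact analogue of the bound $\lVert Pf(s,u_s)\rVert \leq N$ used in Lemma \ref{hold1}.

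With this bound in hand, I would next verify that $w(t) \in D(A^\beta)$. By Lemma \ref{le:T}(iv), $\lVert A^\beta T_\alpha(t-s) Fu(s)\rVert \leq N' B_3 (t-s)^{\alpha(1-\beta)-1}$, which is integrable on $(0,t)$ since $\alpha(1-\beta) > 0$. Because $A^\beta$ is closed, Lemma \ref{int.clsd.op} gives $w(t) \in D(A^\beta)$ with $\lVert A^\beta w(t)\rVert \leq \frac{N' B_3}{\alpha(1-\beta)} T^{\alpha(1-\beta)}$ for all $t \in [0,T]$.

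For the Hölder estimate, I would fix $t \in [0,T]$ and $0 < h < 1$ with $t+h \in [0,T]$, and split
\begin{equation*}
A^\beta w(t+h) - A^\beta w(t) = A^\beta \int_0^t [T_\alpha(t+h-s)-T_\alpha(t-s)]Fu(s)\,ds + \int_t^{t+h} A^\beta T_\alpha(t+h-s) Fu(s)\,ds.
\end{equation*}
Bounding the first integral by Lemma \ref{continuity.es} and the second by Lemma \ref{le:T}(iv), and using $\lVert Fu(s)\rVert \leq N'$, both terms produce an $h^{\alpha(1-\beta)}$ factor by exactly the computations done for $v_1$ and $v_2$ in Lemma \ref{hold1}. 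This yields $\lVert A^\beta w(t+h) - A^\beta w(t)\rVert \leq C h^{\alpha(1-\beta)}$ for a constant $C$ depending on $N', B_3, B_4, \alpha, \beta$, and hence $A^\beta w \in C^{\alpha(1-\beta)}([0,T];L^2_\sigma(\Omega))$. There is no genuine obstacle here: the only substantive work is the boundedness of $Fu$, which follows immediately from Lemma \ref{le:E2}(i) and the regularity $u \in C([-r,T];D(A^{3/4}))$; after that, the argument is a structural copy of Lemma \ref{hold1}.
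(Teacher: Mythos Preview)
Your proposal is correct and follows essentially the same approach as the paper: establish the uniform bound $\lVert Fu(s)\rVert \leq N'$ via Lemma \ref{le:E2}(i) together with the embedding $D(A^{3/4})\hookrightarrow D(A^{1/2})$, and then repeat the argument of Lemma \ref{hold1} verbatim. The only cosmetic difference is that the paper invokes the bound $\lVert u_t-\phi\rVert_{Y_{3/4}}\leq R$ to control $\lVert A^{3/4}u(s)\rVert$, whereas you obtain the same control directly from continuity of $u$ on the compact interval $[-r,T]$; both routes are valid and yield the same conclusion.
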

\begin{proof}
According to  the condition $\lVert u_t -\phi \rVert_{Y_{\frac{3}{4}}} \leq R$  for all $t \in [0,T]$, Lemma (\ref{le:E2})(i) and using the property $D(A^\frac{3}{4}) \hookrightarrow D(A^\frac{1}{2})$, it is easy to check that $t \mapsto Fu(t)$ is bounded on $[0,T]$. Then, by following the  similar arguments as in the proof of Lemma (\ref{hold1}), we can show that $A^\beta w \in C^{\alpha(1-\beta)}([0,T];L^2_{\sigma}(\Omega))$.
\end{proof}
\begin{Lemma}\label{hold3}
Let $0<\beta<1$ and $x \in D(A)$. Consider $\Psi (t)= S_{\alpha}(t)x$, $t \geq0$. Then the map $t \mapsto \Psi(t)$ is H\"older continuous on $[0, T]$ with respect to $D(A^\beta)$-norm.
\end{Lemma}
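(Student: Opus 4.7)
The approach I would take is to apply the fundamental theorem of calculus in $L^2_\sigma(\Omega)$ and then pass the fractional power $A^\beta$ inside the resulting integral via the closed-operator lemma. The key ingredient is the differentiation formula $\frac{d}{ds}(S_\alpha(s)x) = AT_\alpha(s)x$ from Lemma \ref{le:T}(v), combined with the commutativity $AT_\alpha(s)x = T_\alpha(s)Ax$ valid for $x\in D(A)$ by Lemma \ref{le:T}(ii). Integrating $\frac{d}{ds}\Psi(s)$ on $[t,t+h]$ for $0\le t<t+h\le T$, I would obtain
$$\Psi(t+h)-\Psi(t) = \int_{t}^{t+h} T_\alpha(s)Ax\,ds.$$

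Next I would apply $A^\beta$ to both sides. Since $T_\alpha(s)Ax\in D(A)\hookrightarrow D(A^\beta)$ and Lemma \ref{le:T}(iv) gives the singular bound $\|A^\beta T_\alpha(s)Ax\|\le B_3\, s^{\alpha(1-\beta)-1}\|Ax\|$, which is integrable near $s=0$ because $\alpha(1-\beta)\in(0,1)$, Lemma \ref{int.clsd.op} applied to the closed operator $A^\beta$ would permit moving $A^\beta$ under the integral:
$$A^\beta\bigl(\Psi(t+h)-\Psi(t)\bigr) = \int_{t}^{t+h} A^\beta T_\alpha(s)Ax\,ds.$$
Combining this with the estimate from Lemma \ref{le:T}(iv) and integrating explicitly yields
$$\bigl\|\Psi(t+h)-\Psi(t)\bigr\|_{D(A^\beta)} \le \frac{B_3\|Ax\|}{\alpha(1-\beta)}\Bigl[(t+h)^{\alpha(1-\beta)} - t^{\alpha(1-\beta)}\Bigr].$$

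Finally, since $\alpha(1-\beta)\in(0,1)$, the map $\tau\mapsto\tau^{\alpha(1-\beta)}$ is concave with value $0$ at the origin, hence subadditive on $[0,\infty)$, giving $(t+h)^{\alpha(1-\beta)} - t^{\alpha(1-\beta)}\le h^{\alpha(1-\beta)}$ uniformly in $t\ge0$. This would deliver the Hölder bound
$$\bigl\|\Psi(t+h)-\Psi(t)\bigr\|_{D(A^\beta)} \le \frac{B_3\|Ax\|}{\alpha(1-\beta)}\, h^{\alpha(1-\beta)},$$
valid for all $t,t+h\in[0,T]$, proving Hölder continuity with exponent $\alpha(1-\beta)$. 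The only delicate point is the boundary case $t=0$, where the integrand has a singularity at the lower endpoint, but this is absorbed by the $L^1$-integrability of $s^{\alpha(1-\beta)-1}$ on $(0,T)$ and does not present a real obstacle; the hypothesis $x\in D(A)$ is exactly what is needed to push the derivative onto $Ax$ and so convert the apparent $t^{-1}$-singularity of $\|AT_\alpha(s)\|$ from Lemma \ref{le:T}(ii) into the integrable power $s^{\alpha(1-\beta)-1}$.
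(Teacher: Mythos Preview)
Your proposal is correct and follows essentially the same route as the paper: write $\Psi(t+h)-\Psi(t)$ as $\int_t^{t+h}\frac{d}{ds}S_\alpha(s)x\,ds$, use Lemma~\ref{le:T}(v) and (ii) to convert the integrand to $T_\alpha(s)Ax$, then apply $A^\beta$ under the integral and invoke the bound from Lemma~\ref{le:T}(iv) to obtain the H\"older estimate with exponent $\alpha(1-\beta)$. Your justification of moving $A^\beta$ inside via Lemma~\ref{int.clsd.op} and your explicit subadditivity argument for $(t+h)^{\alpha(1-\beta)}-t^{\alpha(1-\beta)}\le h^{\alpha(1-\beta)}$ are in fact slightly more detailed than the paper's own presentation.
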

\begin{proof}
Let $t \in [0,T]$ and $h>0$ such that $t+h \in [0,T]$. Without loss of generality assume $0<h<1$.\\\
By Lemma (\ref{le:T})(ii), (iv)  and (v) we have
\begin{align*}
\lVert A^\beta \Psi(t+h) -A^\beta \Psi (t) \rVert &= \lVert A^\beta S_{\alpha}(t+h)x- A^\beta S_{\alpha}(t)x \rVert \\
& = \lVert A^\beta \int_{t}^{t+h} \frac{d}{d\tau}(S_{\alpha}(\tau)x) d\tau \rVert\\
&= \lVert  A^\beta \int_{t}^{t+h} AT_{\alpha}(\tau) x d\tau \rVert \\
&= \lVert  A^\beta \int_{t}^{t+h} T_{\alpha}(\tau) Ax d\tau \rVert \\
& \leq B_3 \int_{t}^{t+h} \tau^{\alpha(1-\beta)-1} \lVert Ax \rVert d\tau \\
&= \frac{B_3 \lVert Ax \rVert}{\alpha(1-\beta)} [(t+h)^{\alpha(1-\beta)}-t^{\alpha(1-\beta)}]\\
& \leq \frac{B_3 \lVert Ax \rVert}{\alpha(1-\beta)} h^{\alpha (1-\beta)}
\end{align*}
Hence $\Psi \in C^{\alpha(1-\beta)}([0,T]; D(A^\beta))$.
\end{proof}
\begin{proof}[Proof of the Theorem (\ref{Regularity})]
If $u  \in C([-r,T];D(A^{\frac{3}{4}}))$ is the mild solution of the Cauchy problem (\ref{abf2}), then $u(t)= \Psi(t)+v(t)+w(t)$ for all $t \in [0,T]$. Therefore by Lemmas (\ref{hold1}), (\ref{hold2}), (\ref{hold3}), the map $t \mapsto u(t)$ is H\"older continuous on $[0,T]$ with respect to $D(A^\frac{3}{4})$-norm.\\
According to the condition $\lVert u_t -\phi \rVert_{Y_{\frac{3}{4}}} \leq R$  for all $t \in [0,T]$, the estimation in Lemma (\ref{le:E2})(ii) and using the property $D(A^\frac{3}{4}) \hookrightarrow D(A^\frac{1}{2})$, it can be proved that $t \mapsto Fu(t)$ is H\"older continuous on $[0,T]$ with respect to $L^2_{\sigma}(\Omega)$-norm. \\
Now we prove that $t \mapsto Pf(t, u_t)$ is H\"older continuous on $[0, T]$ in $L^2_{\sigma}(\Omega)$.\\
Let $t\in[0,T]$ and $h>0$ be such that $t+h \in [0,T].$ Without loss of generality assume $0<h<\text{min}\{1,r\}$.
\begin{align*}
\lVert u_{t+h}-u_t \rVert_{Y_{\frac{3}{4}}} & =\underset{-r\leq z \leq 0}\sup \lVert u(t+h+z) - u(t+z) \rVert_{D(A^{\frac{3}{4}})}  \\
&\leq \underset{-r\leq \tau \leq t}\sup \lVert u(h+\tau) - u(\tau) \rVert_{D(A^{\frac{3}{4}})}\\
&\leq  \underset{-r\leq \tau \leq -h}\sup \lVert u(h+\tau) - u(\tau) \rVert_{D(A^{\frac{3}{4}})} + \underset{-h\leq \tau \leq 0}\sup \lVert u(h+\tau) - u(\tau) \rVert_{D(A^{\frac{3}{4}})}\\
&\quad + \underset{0\leq \tau \leq t}\sup \lVert u(h+\tau) - u(\tau) \rVert_{D(A^{\frac{3}{4}})}\\
& \leq \underset{-r\leq \tau \leq -h}\sup \lVert \phi(h+\tau) - \phi(\tau) \rVert_{D(A^{\frac{3}{4}})} + \underset{-h\leq \tau \leq 0}\sup \lVert u(h+\tau) - \phi(0) \rVert_{D(A^{\frac{3}{4}})} \\
&\quad + \underset{-h\leq \tau \leq 0}\sup \lVert \phi(0) - \phi(\tau) \rVert_{D(A^{\frac{3}{4}})} + \underset{0\leq \tau \leq t}\sup \lVert u(h+\tau) - u(\tau) \rVert_{D(A^{\frac{3}{4}})}
\end{align*}
Since $\phi \in C^\theta ([-r,0];D(A^{\frac{3}{4}}))$ and $u \in C^{\alpha(1-\beta)}([0,T];D(A^\frac{3}{4}))$, therefore we have 
\begin{center}
$\underset{-r\leq \tau \leq -h}\sup \lVert \phi(h+\tau) - \phi(\tau) \rVert_{D(A^{\frac{3}{4}})} \leq L_1 h^\theta,$\\
$\underset{-h\leq \tau \leq 0}\sup \lVert u(h+\tau) - \phi(0) \rVert_{D(A^{\frac{3}{4}})} \leq L_2 (h+\tau)^{\alpha(1-\beta)}\leq L_2 h^{\alpha(1-\beta)}$,\\
$\underset{-h\leq \tau \leq 0}\sup \lVert \phi(0) - \phi(\tau) \rVert_{D(A^{\frac{3}{4}})} \leq L_1 (-\tau)^\theta \leq L_1 h^\theta$,\\
$\underset{0\leq \tau \leq t}\sup \lVert u(h+\tau) - u(\tau) \rVert_{D(A^{\frac{3}{4}})}\leq L_2 h^{\alpha(1-\beta)}$,
\end{center}
where, $L_1$, $L_2$, $L_3$, $L_4$ are positive constants.
This shows that $t \mapsto u_t$ is H\"older continuous on $[0,T]$ in $Y_{\frac{3}{4}}$. Hence by assumption (H2), the map $t \mapsto Pf(t,u_t)$ is H\"older continuous on $[0,T]$ in a Banach space $L^2_{\sigma}(\Omega)$.\\
Thus, it is proved that the mild solution is a classical solution.  
\end{proof}
\section*{Conclusion}
 The existence of $D(A^{\frac{1}{2}})$-valued local mild solution has been established for a time-fractional NSE driven by finite delayed external forces by using Banach fixed point theorem when the initial datum belong to an open subset $U$ of $D(A^{\frac{1}{2}})$. It is also proved that local mild solution can be continued globally if the initial datum curve belong to the whole space $U=D(A^\frac{1}{2})$.   Regularity result has been demonstrated by considering more stronger initial datum curve and suitable assumption on forces.
\section*{Acknowledgement}
 The authors acknowledge the support provided by Ministry of Human Resource Development(MHRD), Government of India. 
\bibliographystyle{abbrv}
\bibliography{REF}
\end{document}